\documentclass[a4paper]{article}
\usepackage{amsmath,amssymb}
\usepackage[utf8]{inputenc}
\usepackage[dvips]{epsfig}
\usepackage[english,frenchb]{babel}
\usepackage[all]{xy}


\newcommand{\ZZ}{\mathbb{Z}}

\newcommand{\QQ}{\mathbb{Q}}

\newcommand{\Y}{\mathsf{Y}}
\newcommand{\oV}{\mathbf{V}}
\newcommand{\oW}{\mathbf{W}}
\newcommand{\oP}{\mathbf{P}}
\newcommand{\bx}{\epsfig{file=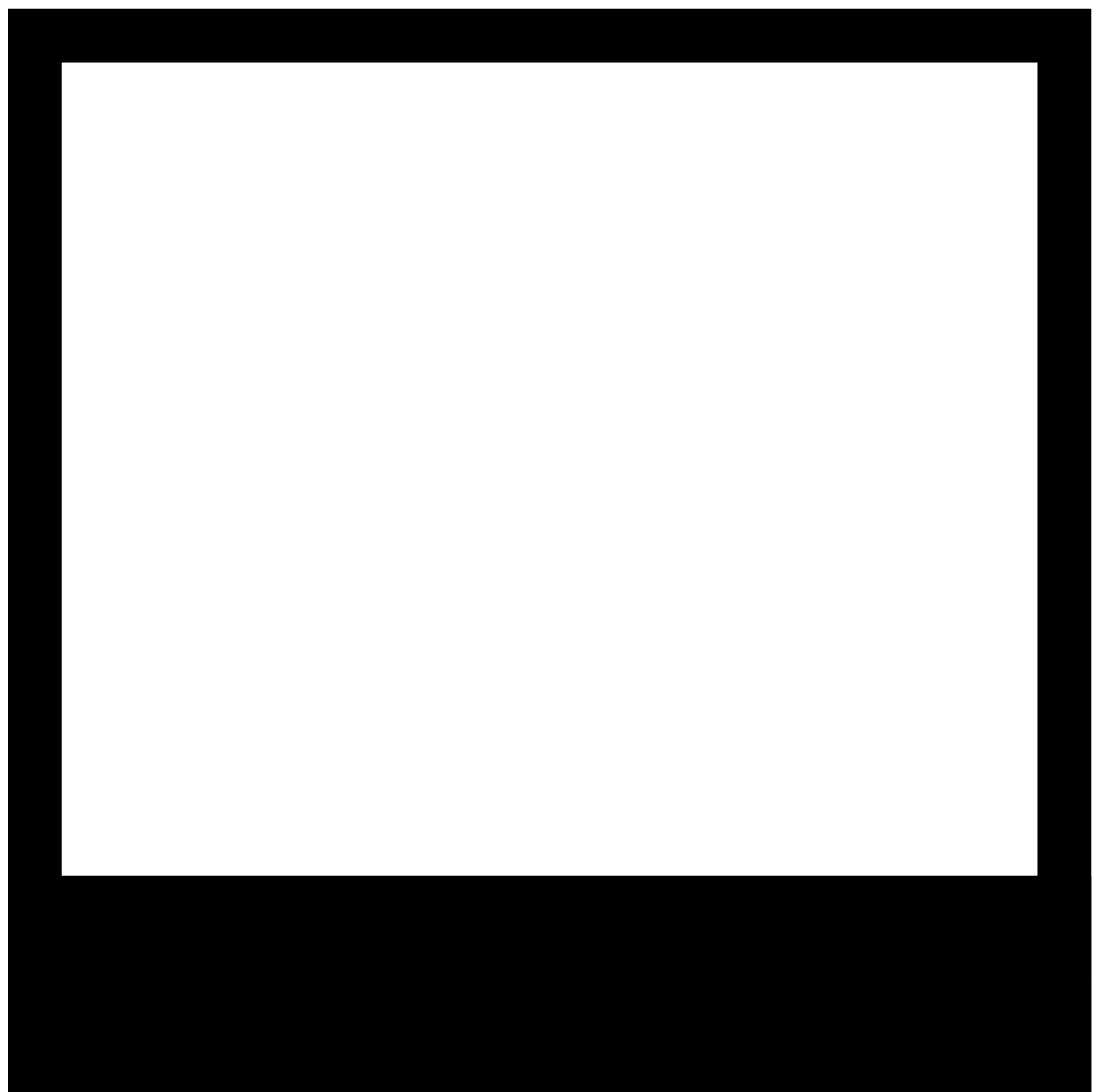,height=2mm}}
\newcommand{\sym}{\mathfrak{S}}

\newcommand{\una}{\epsfig{file=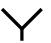,height=2mm}}

\renewcommand{\over}{/}
\renewcommand{\mod}{\operatorname{mod}}
\newcommand{\un}{\boldsymbol{1}}
\newcommand{\under}{\backslash}
\newcommand{\thetav}{\theta_{\oV}}
\newcommand{\id}{\operatorname{Id}}

\newcommand{\Ind}{\operatorname{Ind}}
\renewcommand{\mod}{\operatorname{mod}}
\newcommand{\cha}{\operatorname{Ch}}
\newcommand{\gammav}{\gamma_{\oV}}
\newcommand{\cobar}{\mathsf{B}}


\newtheorem{theorem}{Théorème}[section] 
\newtheorem{proposition}[theorem]{Proposition} 
 
\newtheorem{corollary}[theorem]{Corollaire} 
\newtheorem{lemma}[theorem]{Lemme} 
\newtheorem{definition}{D{\'e}finition}
 
\newtheorem{remark}[theorem]{Remarque}

\newenvironment{proof}{\begin{trivlist}\item{\bf{Preuve.}}}
  {\hfill\rule{2mm}{2mm}\end{trivlist}}

\title{Sur une opérade ternaire \\ liée aux treillis de Tamari}
\author{F. Chapoton}
\date{\today}

\setcounter{section}{-1}

\begin{document}

\maketitle

\begin{abstract}
  On introduit une opérade anticyclique $\oV$ définie par une
  présentation ternaire quadratique. On montre qu'elle admet une base
  indexée par les arbres binaires plans. On relie cette construction à
  la famille des treillis de Tamari $(\Y_n)_{n \geq 0}$ en
  construisant un isomorphisme entre $\oV(2n+1)$ et le groupe de
  Grothendieck de la catégorie $\mod \Y_n$ qui envoie la base de
  $\oV(2n+1)$ sur les classes des modules projectifs et qui transforme
  la structure anticyclique de $\oV$ en la transformation de Coxeter
  de la catégorie dérivée de $\mod \Y_n$. La dualité de Koszul des
  opérades permet alors de calculer le polynôme caractéristique de cette
  transformation de Coxeter en utilisant une transformation de
  Legendre.
\end{abstract}

\selectlanguage{english}

\begin{abstract}
  We introduce an anticyclic operad $\oV$ given by a ternary generator
  and a quadratic relation. We show that it admits a natural basis
  indexed by planar binary trees. We then relate this construction to
  the familly of Tamari lattices $(\Y_n)_{n \geq 0}$ by defining an
  isomorphism between $\oV(2n+1)$ and the Grothendieck group of the
  category $\mod \Y_n$. This isomorphism maps the basis of $\oV(2n+1)$
  to the classes of projective modules and sends the anticyclic map of
  the operad $\oV(2n+1)$ to the Coxeter transformation of the derived
  category of $\mod \Y_n$. The Koszul duality theory for operads then
  allows to compute the characteristic polynomial of the Coxeter
  transformation by a Legendre transform.
\end{abstract}

Keywords: operad ; Dendriform operad ; anticyclic operad ; ternary operad ; binary tree ; Tamari lattice ; Coxeter transformation\\

MSC2010: 18D50 ; 05C05 ; 06A11

\selectlanguage{frenchb}

\section{Introduction}

Les arbres binaires plans sont des objets combinatoires très
classiques, qui sont apparus depuis quelques années dans des
situations algébriques variées, dont l'une des plus remarquables est
la description des algèbres dendriformes libres à l'aide d'arbres
binaires plans, due à J.-L. Loday \cite{loday-dialgebras}. Ce résultat
s'exprime, dans la cadre conceptuel des opérades, comme la description
de l'opérade Dendriforme en termes d'arbres binaires plans.

Dans l'étude des algèbres dendriformes libres et de l'opérade
Dendriforme, poursuivie depuis par différents auteurs
\cite{k1,k3,hnt,ronco,aguiar-sottile,arithmetree}, il est
progressivement devenu clair qu'une famille de posets jouait un rôle
fondamental. Ce sont les treillis de Tamari, initialement introduits
par D. Tamari de façon purement combinatoire \cite{tamari} en termes
de parenthèsages. Ces treillis apparaissent en théorie des
représentations de deux façons distinctes, soit comme ordres partiels
sur les modules basculants \cite{riedtmann,happel-unger}, soit parmi les
treillis cambriens, comme ordres partiels sur les amas, dans la
théorie des algèbres amassées de S. Fomin et A. Zelevinsky
\cite{fominz,cambrien}.

Un aspect intriguant de cette relation profonde entre l'opérade
dendriforme et les treillis de Tamari est le point suivant. Si $\Y_n$
est l'ensemble des arbres binaires plans à $n$ sommets, on peut
définir deux applications linéaires de $\ZZ \Y_n$ dans lui-même. La
première application $\tau$ provient de la structure anticyclique de
l'opérade Dendriforme ; elle est en particulier périodique, de période
$n+1$. La seconde application $\theta$ provient de la catégorie $\mod
\Y_n$ des modules sur le poset $\Y_n$. Elle décrit l'action d'un
endofoncteur naturel de la catégorie dérivée $D\mod \Y_n$ de cette
catégorie. Il se trouve que l'application $\tau$ est (au signe près)
le carré de $\theta$ \cite{coxeter_tamari}. En particulier,
l'application $\theta$ est périodique, de période $2n+2$.

Dans un article précédent \cite{categorifi}, cette situation a été
décrite et précisée en introduisant une catégorification de l'opérade
Dendriforme via les catégories de modules sur les treillis de
Tamari. Cette construction utilise de manière essentielle une famille
d'éléments de l'opérade Dendriforme indexée par les arbres
non-croisés. Il se trouve que les arbres non-croisés sont en bijection
avec les arbres ternaires, qui forment une base de l'opérade ternaire
libre sur un générateur. On peut donc se demander si une opérade
ternaire ne pourrait pas jouer un rôle dans le contexte dendriforme.

L'objet du présent article est précisément de présenter une relation
entre les treillis de Tamari et une certaine opérade ternaire. Cette
opérade $\oV$ est engendrée par un élément impair de degré $3$ modulo une
unique relation quadratique. On montre qu'elle admet en degré $2n+1$
une base indexée par les arbres binaires plans à $n$ sommets, en
utilisant la méthode des bases de Gröbner pour les opérades
\cite{hoffbeck,dotsenko}.

On montre que l'opérade $\oV$ possède une structure anticyclique, ce
qui donne une application linéaire $\thetav$ de période $2n+2$ sur le
groupe abélien $\oV(2n+1)$. Le premier résultat principal de l'article est un
isomorphisme entre $\oV(2n+1)$ et $\ZZ \Y_n$ qui identifie $\thetav$
et $\theta$ et donne donc une nouvelle preuve de la périodicité de
$\theta$ mentionnée plus haut.

On utilise cette description de $\theta$ pour en obtenir le
polynôme caractéristique, pour lequel une conjecture a été formulée
dans \cite{chapoton_AIF}. On utilise pour cela une fonction symétrique
associée à chaque opérade cyclique, qui sert à coder l'action du
groupe cyclique sur les composantes de cette opérade. Il se trouve que
les fonctions symétriques associées à deux opérades cycliques duales de Koszul
sont reliées par une transformation de Legendre. Il suffit donc de
calculer la fonction symétrique associée à la duale de Koszul de
$\oV$, qui se trouve être très simple, puis sa transformée de
Legendre. Cette même technique a déjà été employée pour calculer le
polynôme caractéristique de $\tau$ en utilisant sa relation avec
l'opérade Dendriforme.

On dispose donc ainsi de deux opérades dont la structure anticyclique
est fortement reliée aux treillis de Tamari, chacune à sa
manière. L'opérade Dendriforme permet de décrire le carré de $\theta$,
et la base la plus naturelle de cette opérade correspond aux modules
simples sur les treillis de Tamari. L'opérade $\oV$ permet de décrire
$\theta$ et possède une base correspondant aux modules projectifs. Ces
deux structures forment ensemble une structure algébrique assez
complexe et remarquable.

Pour terminer cette introduction, voici quelques mots sur le contexte
en théorie des représentations. La périodicité de la transformation de
Coxeter pour la catégorie $D\mod \Y_n$ se place en fait dans un
ensemble plus vaste de conjectures. Le treillis de Tamari $\Y_n$ peut
en effet être considéré soit comme le treillis cambrien associé aux
carquois de type $A_n$ équiorienté, soit comme le poset des modules
basculants associé aux carquois de type $A_{n+1}$ équiorienté. La
propriété de périodicité semble également vraie pour tous les treillis
cambriens et pour tous les posets de modules basculants associés aux
carquois de type $ADE$.

Pour l'instant, cette propriété est démontrée pour tous les carquois
de type $A$. Elle résulte du cas équiorienté, démontré ici ou dans
\cite{coxeter_tamari} en utilisant la théorie des opérades, et des
théorèmes de Ladkani montrant l'équivalence dérivée entre les treillis
cambriens et les posets de modules basculants \cite{lad1,lad2} lorsque
le carquois change par mutation en un puits ou une source.

Cette périodicité n'est par ailleurs qu'une conséquence d'une
propriété conjecturale plus forte. En effet, elle est vraie si les
catégories dérivées $D \mod \Y_n$ sont Calabi-Yau fractionnaires. Pour
l'instant, on sait seulement que cette propriété plus forte est vraie
pour $n\leq 3$, où les catégories $D \mod \Y_n$ admettent une
description simple en termes de catégories de modules sur les
carquois $A_1,A_2$ et $D_5$.

\section{Construction et propriétés de $\oV$}

\subsection{Généralités et notations}

On se place dans la catégorie monoïdale symétrique des groupes
abéliens gradués, avec la règle des signes de Koszul par rapport à
cette graduation. Les morphismes sont les applications linéaires
respectant la graduation.

Pour le cadre général de la théorie des opérades, on renvoie le
lecteur aux ouvrages \cite{markl,lodayv}.

On ne travaille dans cet article qu'avec des opérades non-symétriques,
qu'on appelle simplement des opérades.

Soit $\oP$ une collection de groupes abéliens gradués
$\oP(n)=\oplus_{k \in\ZZ} \oP(n)_k$ pour $n\geq 1$ et soit
$a\in\oP(n)_k$. On appelle $n$ le degré de $a$, noté $\#a$. On appelle
$k$ le poids de $a$, simplement noté $a$ dans les exposants de $-1$,
par un abus de notation commode qui ne porte pas à confusion. La règle
des signes de Koszul s'applique uniquement à la graduation par le
poids.

On rappelle brièvement la définition des opérades.
\begin{definition}
  Une \textbf{opérade} $\oP$ est la donnée d'une collection de groupes
  abéliens gradués $\oP(n)=\oplus_{k \in\ZZ} \oP(n)_k$ pour $n\geq 1$,
  d'une \textbf{unité} $\un \in \oP(1)_0$ et d'applications linéaires
  \begin{equation}
    \circ_i : \oP(m)\otimes \oP(n) \longrightarrow \oP(m+n-1)
  \end{equation}
  pour $1 \leq i \leq m$. Les \textbf{compositions} $\circ_i$ doivent
  vérifier
  \begin{equation}
    \label{axiome_a}
    (a \circ_i b) \circ_{j+i-1} c = a \circ_i (b \circ_j c),
  \end{equation}
  et 
  \begin{equation}
    \label{axiome_c}
    (a \circ_i b) \circ_{j+\# b-1} c = (-1)^{bc} (a \circ_j c) \circ_i b \quad \text{si }\quad i< j.    
  \end{equation}
\end{definition}

On note $\circ_{\max}$ pour la composition la plus à droite,
\textit{i.e.}
\begin{equation}
  a \circ_{\max} b = a \circ_{\# a} b.
\end{equation}

On rappelle aussi la définition des opérades anticycliques.
\begin{definition}
  Une structure d'\textbf{opérade anticyclique} sur une opérade $\oP$ est la donnée
  pour tout $n \geq 1$ d'un opérateur $\theta$ sur $\oP(n)$ vérifiant
  $\theta(\un)=-\un$, $\theta^{n+1}=\id$ et tels qu'on ait les axiomes suivants :
  \begin{equation}
    \label{cyclique1}
    \theta(a \circ_i b)=\theta(a) \circ_{i-1} b \quad \text{ si }\quad i>1,
  \end{equation}
  et
  \begin{equation}
    \label{cyclique2}
    \theta(a \circ_1 b)=-(-1)^{ab}\theta(b) \circ_{\max} \theta(a).
  \end{equation}
\end{definition}

\begin{remark}
  La notion \textbf{opérade cyclique} est définie similairement, mais
  avec deux changements de signes : $\theta(\un)=\un$ et le coté droit
  de \eqref{cyclique2} est remplacé par son opposé.
\end{remark}

\subsection{L'opérade anticyclique ternaire $\oV$}

Soit $\oV$ l'opérade définie par la présentation par générateurs et
relations suivante.

On se donne un générateur $\bx$ de degré $3$ et de poids $1$ et on
impose la relation
\begin{equation}
  \label{relation_V}
  \bx \circ_1 \bx - \bx \circ_2 \bx + \bx \circ_3 \bx=0.
\end{equation}

On introduit sur $\oV$ une structure anticyclique.

\begin{proposition}
  Il existe sur $\oV$ une unique structure d'opérade anticyclique, donnée
  pour tout $n\geq 0$ par un opérateur $\thetav$ sur $\oV(2n+1)$,
  telle que
  \begin{equation}
    \thetav(\bx)=- \bx.
  \end{equation}
\end{proposition}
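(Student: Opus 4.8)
The plan is to realise $\oV$ as a quotient of the free operad on $\bx$, to equip that free operad with an anticyclic structure, and to check that it descends to $\oV$; uniqueness is then formal. Indeed, axioms~\eqref{cyclique1} and~\eqref{cyclique2} express $\thetav$ applied to any composite $a\circ_i b$ in terms of $\thetav(a)$ and $\thetav(b)$. Since $\oV$ is generated as an operad by $\bx$, every homogeneous element of $\oV$ is a $\ZZ$-linear combination of $\un$ and of iterated compositions of copies of $\bx$; hence any anticyclic structure is completely determined by the values $\thetav(\un)=-\un$ (forced by the definition of an anticyclic operad) and $\thetav(\bx)=-\bx$ (the imposed normalisation), so at most one exists, and it is automatically concentrated in the odd arities, which are the ones where $\oV$ is supported.

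For existence, let $\mathcal{F}$ be the free non-symmetric operad on the odd ternary element $\bx$, so that $\oV=\mathcal{F}/I$ with $I$ the operadic ideal generated by the relator $R=\bx\circ_1\bx-\bx\circ_2\bx+\bx\circ_3\bx\in\mathcal{F}(5)$. I would first build an anticyclic structure on $\mathcal{F}$ itself. A basis of $\mathcal{F}(2n+1)$ is given by the planar trees with $n$ trivalent internal vertices and $2n+1$ leaves; $\thetav$ should be the cyclic rotation of the root around the $2n+2$ boundary half-edges, carrying the Koszul sign dictated by the odd weight of $\bx$, the only input datum being the $\ZZ/4$-action $\thetav(\bx)=-\bx$ on $\mathcal{F}(3)=\ZZ\bx$, which is legitimate because $(-1)^4=1$. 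This is where the real work lies: one must check that the rule prescribed by~\eqref{cyclique1}--\eqref{cyclique2} is well defined, i.e.\ independent of the way an element of $\mathcal{F}$ is presented as an iterated composition, and that $\thetav^{\,n+1}=\id$ on $\mathcal{F}(n)$; the planar-tree-with-rotating-root picture is the bookkeeping device that makes both the coherence and the sign conventions transparent, and I would either invoke the standard description of the free anticyclic operad on a cyclic generator or carry out this verification on trees directly.

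Finally I would show that $I$ is stable under $\thetav$, so that $\thetav$ descends to $\oV=\mathcal{F}/I$ still satisfying~\eqref{cyclique1}, \eqref{cyclique2}, $\thetav(\un)=-\un$ and $\thetav^{\,2n+2}=\id$ on $\oV(2n+1)$. Since~\eqref{cyclique1} and~\eqref{cyclique2} rewrite $\thetav(x\circ_i y)$ as $\thetav(x)\circ_{i-1}y$ for $i>1$ and as $-(-1)^{xy}\thetav(y)\circ_{\max}\thetav(x)$ for $i=1$, an induction on the weight reduces the stability of $I$ to the single assertion $\thetav(R)\in\ZZ R$: if $\thetav$ preserves $I$ in weight $<w$, then for a generator $x\circ_i y$ of $I$ of weight $w$ with $x\in I$ (resp.\ $y\in I$) of smaller weight, the right-hand side given by~\eqref{cyclique1} or~\eqref{cyclique2} has $x$ (resp.\ $y$) replaced by $\thetav(x)\in I$ (resp.\ $\thetav(y)\in I$) and therefore lies in $I$, $I$ being an ideal. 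A one-line computation, using $(-1)^{\bx\bx}=-1$ and $\#\bx=3$, then gives
\begin{align*}
  \thetav(\bx\circ_1\bx) &= -(-1)^{\bx\bx}\,\thetav(\bx)\circ_{\max}\thetav(\bx)=\bx\circ_3\bx,\\
  \thetav(\bx\circ_2\bx) &= \thetav(\bx)\circ_1\bx=-\,\bx\circ_1\bx,\\
  \thetav(\bx\circ_3\bx) &= \thetav(\bx)\circ_2\bx=-\,\bx\circ_2\bx,
\end{align*}
so that $\thetav(R)=\bx\circ_3\bx+\bx\circ_1\bx-\bx\circ_2\bx=R$; in fact $R$ is an eigenvector of $\thetav$ of eigenvalue $1$. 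Together with the uniqueness above, this proves the proposition, the one genuine obstacle being the construction of the anticyclic structure on $\mathcal{F}$ itself.
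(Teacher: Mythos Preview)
Your proposal is correct and follows essentially the same approach as the paper: invoke the general theory of anticyclic operads (that an anticyclic structure on a quotient of a free operad is determined by its value on the generator and exists precisely when the relator is sent to a multiple of itself), then compute $\thetav(R)=\bx\circ_3\bx+\bx\circ_1\bx-\bx\circ_2\bx=R$. The paper compresses all of your discussion of uniqueness, the free anticyclic operad, and the stability of the ideal into the single phrase ``par la th\'eorie g\'en\'erale des op\'erades anticycliques'', and your explicit term-by-term computation of $\thetav(R)$ matches the paper's exactly.
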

\begin{proof}
  Par la théorie générale des opérades anticycliques, il suffit de
  vérifier la compatibilité de $\thetav$ avec la relation
  \eqref{relation_V}, en montrant que l'image par $\thetav$ de cette
  relation est un multiple de cette relation. On obtient, en utilisant
  \eqref{cyclique1} et \eqref{cyclique2} et en gardant les termes dans
  l'ordre initial,
  \begin{equation*}
      \bx \circ_3 \bx + \bx \circ_1 \bx - \bx \circ_2 \bx,
  \end{equation*}
  qui est bien proportionnel à \eqref{relation_V}.
\end{proof}

\subsection{Base $Q$ de $\oV$}

On donne ici une base explicite de $\oV$.

\begin{definition}
  Un \textbf{arbre binaire plan} est un graphe connexe et simplement connexe,
  muni d'un plongement dans le plan considéré à isotopie près, dont
  les sommets sont soit trivalents (\textbf{sommets internes}) soit univalents,
  et muni d'un sommet univalent distingué (\textbf{racine}). On appelle
  \textbf{feuilles} les sommets univalents non distingués.
\end{definition}

On convient de dessiner les arbres binaires plans avec leurs feuilles
en haut et leur racine en bas. Par commodité, on oriente
(implicitement) les arêtes en direction de la racine.

\begin{figure}
  \begin{center}
    \scalebox{0.3}{\includegraphics{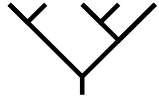}}
    \caption{Un arbre binaire plan dans $\Y_4$}
    \label{arbre}
  \end{center}
\end{figure}

Un arbre binaire plan est soit l'arbre trivial $|$ sans sommet
interne, soit se décompose de manière unique, par enlèvement de
l'arête entrante en sa racine, en une paire $(x,y)$ d'arbres binaires
plans.

Pour $n \geq 0$, on note $\Y_n$ l'ensemble des arbres binaires plans à
$n$ sommets internes.

A chaque élément $x$ de $\Y_n$, on associe un élément $Q_x$ de
$V(2n+1)_{n}$ par récurrence sur $n$. Si $|$ est l'unique arbre sans
sommet, on pose $Q_{|}=\un$, l'unité de l'opérade $\oV$. Sinon, on
associe à l'arbre binaire plan $z$ qui se décompose en une paire $(x,y)$
l'élément
\begin{equation}
  \label{defi_Q}
 Q_z=(-1)^{Q_x+Q_x Q_y}(\bx \circ_3 Q_y) \circ_1 Q_x  = (-1)^{Q_x} (\bx \circ_1 Q_x) \circ_{\max} Q_y.
\end{equation}
L'égalité de ces deux expressions résulte de l'axiome \eqref{axiome_c} des opérades.

Par exemple, on a $Q_{\una}=\bx$ et on associe à l'arbre $x$ de la figure \ref{arbre} l'élément
\begin{equation*}
  Q_x=(\bx \circ_3 (\bx \circ_1 \bx)) \circ_{1} \bx.
\end{equation*}

On note $Q$ l'ensemble des éléments $Q_x$ ainsi définis.

On utilise ci-dessous le formalisme des bases de Gröbner pour les
opérades, voir \cite{hoffbeck,dotsenko} et \cite[\S 8.1]{lodayv}.
\begin{proposition}
  La relation \eqref{relation_V} forme une base de Gröbner de l'idéal
  qu'elle engendre dans l'opérade libre sur $\bx$, pour un ordre
  admissible dans lequel la composition $\circ_2$ domine les
  compositions $\circ_1$ et $\circ_3$.
\end{proposition}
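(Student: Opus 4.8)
The plan is to apply the Diamond Lemma / Buchberger criterion for operadic Gröbner bases: since the generating set consists of a single relation, it suffices to check that every S-polynomial (every nontrivial overlap ambiguity) of the relation with itself reduces to zero modulo the relation. First I would fix an admissible order on the monomial basis of the free operad on the ternary generator $\bx$ in which the middle composition $\circ_2$ is weighted more heavily than $\circ_1$ and $\circ_3$; concretely one can encode a tree monomial by the sequence of compositions read along some canonical traversal and compare lexicographically, giving $\bx\circ_2\bx$ as the leading term of \eqref{relation_V}. With this choice the relation rewrites $\bx\circ_2\bx \rightsquigarrow \bx\circ_1\bx + \bx\circ_3\bx$, and the task becomes purely combinatorial.

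Next I would enumerate the overlaps of the leading term $\bx\circ_2\bx$ with itself. Since $\bx$ has arity $3$, composing one copy of $\bx$ into an input of another produces a tree with $5$ leaves; the self-overlaps occur in the arity-$7$ component (two internal vertices glued so as to share an edge in the "critical" slot), and the relevant ambiguities are the finitely many ways the substituted subtree $\bx\circ_2\bx$ meets another occurrence of $\bx\circ_2\bx$. I expect a small number of such overlap monomials — essentially the compositions $(\bx\circ_2\bx)\circ_i\bx$ and $\bx\circ_j(\bx\circ_2\bx)$ for the indices $j=2$ (and possibly adjacent indices whose normal forms interact) — each of which can be reduced in two ways corresponding to the two occurrences of the leading term. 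For each such monomial I would carry out both reduction sequences, repeatedly applying the rewriting rule and the operad axioms \eqref{axiome_a} and \eqref{axiome_c} to keep tree monomials in their canonical form, and check that the two resulting normal forms coincide (equivalently, that their difference lies in the ideal generated by \eqref{relation_V} and reduces to $0$).

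The main obstacle will be bookkeeping: correctly listing all overlap ambiguities for a single ternary quadratic relation and, in each case, tracking the reindexing of the $\circ_i$ under the associativity axiom \eqref{axiome_a} and the commutativity axiom \eqref{axiome_c} so that the two branches of each diamond are compared as genuinely equal elements of the free operad. Because the generating relation is homogeneous (arity $3$, weight $1$), all these computations take place in the single component of arity $7$ and weight $2$, which bounds the computation and makes exhaustive verification feasible. Once every S-polynomial is confirmed to reduce to zero, the Diamond Lemma yields that \eqref{relation_V} is a Gröbner basis of the ideal it generates, and the normal monomials — those avoiding a $\circ_2$ at a "left-comb" position — are exactly the tree monomials built using only $\circ_1$ and $\circ_3$ in the pattern defining the elements $Q_x$, which is consistent with (and will later be used to prove) the claim that $Q$ is a basis of $\oV$.
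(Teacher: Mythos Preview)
Your plan is correct and matches the paper's approach exactly: apply the Diamond Lemma with the rewriting rule $\bx\circ_2\bx \to \bx\circ_1\bx + \bx\circ_3\bx$ and check confluence of the self-overlaps. The paper carries this out by observing that there is in fact a \emph{single} critical pair, namely $(\bx\circ_2\bx)\circ_3\bx = \bx\circ_2(\bx\circ_2\bx)$, and verifies by direct computation that both reduction sequences terminate at $\bx\circ_1(\bx\circ_3\bx)+\bx\circ_3(\bx\circ_1\bx)$; your enumeration will collapse to this one case once you note that two copies of the pattern $\bx\circ_2\bx$ can only overlap along the chain root--middle child--middle grandchild.
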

\begin{proof}
  Pour cela, il suffit de montrer une propriété de confluence pour la
  réécriture de la paire critique
  \begin{equation}
    (\bx \circ_2 \bx )\circ_3 \bx = \bx \circ_2 (\bx \circ_2 \bx ).
  \end{equation}
  Il s'agit de vérifier que les deux calculs obtenus en appliquant à volonté
  \begin{equation}
    \label{reecrire}
    \bx \circ_2 \bx \longrightarrow \bx \circ_1 \bx + \bx \circ_3 \bx
  \end{equation}
  à cette expression donnent le même résultat.

  D'une part, la réécriture en partant de $(\bx \circ_2 \bx )\circ_3 \bx $ donne
  \begin{equation*}
    (\bx \circ_1 \bx + \bx \circ_3 \bx) \circ_3 \bx = \bx \circ_1 (\bx \circ_3 \bx) + \bx \circ_3 (\bx \circ_1 \bx).
  \end{equation*}

  D'autre part, la réécriture en partant de $\bx \circ_2 (\bx \circ_2 \bx )$ donne
  \begin{equation*}
    \bx \circ_2 (\bx \circ_1 \bx + \bx \circ_3 \bx) = (\bx \circ_2 \bx) \circ_2 \bx + (\bx \circ_2 \bx) \circ_4 \bx. 
  \end{equation*}
  On utilise à nouveau \eqref{reecrire} deux fois pour obtenir
  \begin{multline*}
    (\bx \circ_1 \bx + \bx \circ_3 \bx) \circ_2 \bx + (\bx \circ_1 \bx + \bx \circ_3 \bx) \circ_4 \bx = \\ 
\bx \circ_1 (\bx \circ_2 \bx) - (\bx \circ_2 \bx) \circ_5 \bx - (\bx \circ_2 \bx) \circ_1 \bx + \bx \circ_3 (\bx \circ_2 \bx).
  \end{multline*}
   On utilise à nouveau \eqref{reecrire} quatre fois pour obtenir
   \begin{multline*}
\bx \circ_1 (\bx \circ_1 \bx + \bx \circ_3 \bx) - (\bx \circ_1 \bx + \bx \circ_3 \bx) \circ_5 \bx \\- (\bx \circ_1 \bx + \bx \circ_3 \bx) \circ_1 \bx + \bx \circ_3 (\bx \circ_1 \bx + \bx \circ_3 \bx).     
   \end{multline*}
   Dans cette somme, certains termes se simplifient par paires et il reste
   \begin{equation*}
     \bx \circ_1 (\bx \circ_3 \bx)+\bx \circ_3 (\bx \circ_1 \bx),     
   \end{equation*}
   ce qui est bien égal à l'autre réécriture.
\end{proof}

\begin{proposition}
  Pour tout $n\geq 0$, l'ensemble $(Q_x)_{x \in \Y_n}$ forme une base
  de $\oV(2n+1)$.
\end{proposition}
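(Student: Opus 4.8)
The plan is to read off the statement from the previous proposition, using the standard consequence of the theory of Gröbner bases for operads (see \cite{hoffbeck,dotsenko} and \cite[\S 8.1]{lodayv}): once a generating set of a relation ideal is known to be a Gröbner basis, the images in the quotient operad of the tree monomials not divisible by any of the leading monomials form a basis of the quotient. Here the ambient object is the free operad on the ternary generator $\bx$, whose tree monomials are the planar rooted trees all of whose internal vertices are ternary; such a tree with $n$ internal vertices has $2n+1$ leaves and weight $n$, so it contributes to $\oV(2n+1)_n$.

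By the previous proposition, \eqref{relation_V} is a Gröbner basis whose leading monomial, for the chosen order, is $\bx \circ_2 \bx$. The first concrete step is to translate divisibility by $\bx \circ_2 \bx$ into a combinatorial condition: a ternary tree is divisible by $\bx \circ_2 \bx$ exactly when it has an internal vertex whose middle child is again an internal vertex. Hence the reduced tree monomials are precisely the ternary trees in which the middle child of every internal vertex is a leaf, and those with $n$ internal vertices form a basis of $\oV(2n+1)$.

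Next I would set up the bijection with $\Y_n$. To a planar binary tree $z$ I attach a reduced ternary tree $m_z$ by the recursion $m_{|}=|$ and, when $z$ decomposes into the pair $(x,y)$, $m_z$ is the ternary tree whose root has left subtree $m_x$, middle subtree a single leaf, and right subtree $m_y$. Each $m_z$ is reduced by induction (the new root vertex has a leaf as middle child, and $m_x,m_y$ are reduced), and conversely every reduced ternary tree comes from a unique $z$ by removing the root together with its middle leaf; so $z\mapsto m_z$ is a bijection from $\Y_n$ onto the reduced ternary trees with $n$ internal vertices. As a sanity check, the generating series $f$ of these reduced trees by number of internal vertices satisfies $f=1+xf^2$, so both sides are counted by the Catalan numbers.

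Finally I would match $Q_z$ with $m_z$: in $(\bx \circ_1 Q_x)\circ_{\max}Q_y$ the graftings are on the first and last leaves of $\bx$, leaving its middle leaf untouched, so at the level of tree monomials this composite is exactly $m_z$, and $Q_z$ is its image in $\oV$ up to the sign $(-1)^{Q_x}$. Since multiplying basis vectors by signs preserves the property of being a basis, the family $(Q_x)_{x\in\Y_n}$ is a basis of $\oV(2n+1)$. The only genuinely new input is the confluence computation already carried out in the previous proposition; granting that, the sole point needing a little care here is the translation from divisibility by $\bx\circ_2\bx$ to the condition "every middle child is a leaf", after which the bijection with $\Y_n$ and the identification with the $Q_x$ are routine bookkeeping on planar trees.
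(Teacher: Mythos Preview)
Your proof is correct and follows the same approach as the paper: invoke the Gr\"obner basis result to conclude that the reduced tree monomials (those avoiding $\bx\circ_2\bx$, i.e.\ the iterated compositions using only $\circ_1$ and $\circ_3$) form a basis, then identify these up to sign with the $Q_x$. The paper compresses the combinatorial bijection and the sign discussion into a single sentence, whereas you spell out the bijection $z\mapsto m_z$ and the inductive identification more carefully, but the argument is the same.
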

\begin{proof}
  La propriété de Gröbner entraîne l'énoncé. On sait en effet qu'une
  base de l'opérade quotient par un idéal est donné par les monômes
  réduits relativement à une base de Gröbner de cet idéal. Dans le cas
  présent, les monômes réduits de l'opérade libre sur $\bx$ par
  rapport à la relation \eqref{relation_V}, pour l'ordre admissible
  choisi, sont exactement les compositions itérées ne faisant pas
  intervenir la composition $\circ_2$, \textit{i.e.} (au signe près)
  les éléments de $Q$.

\end{proof}

\begin{corollary}
  L'opérade $\oV$ est de Koszul.
\end{corollary}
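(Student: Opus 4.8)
The plan is to deduce Koszulness directly from the Gröbner basis criterion for operads, so that essentially all the work has already been done by the previous proposition. Recall that $\oV$ is a \emph{quadratic} operad in the relevant sense: it is generated by the single element $\bx$, which sits in weight $1$ and arity $3$, modulo the relation \eqref{relation_V}, which lies in the weight-$2$ part of the free operad on $\bx$. The criterion I want to invoke states that a quadratic operad admitting a quadratic Gröbner basis — that is, a Gröbner basis of its ideal of relations consisting of weight-$2$ elements only — is Koszul; this is the operadic analogue of the classical PBW/Gröbner theorem and is proved in \cite{hoffbeck,dotsenko} (see also \cite[\S 8.1]{lodayv}).

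Now the preceding proposition asserts exactly that the single relation \eqref{relation_V} is already a Gröbner basis of the ideal it generates, for an admissible order in which $\circ_2$ dominates $\circ_1$ and $\circ_3$. This Gröbner basis consists of one element, which is of weight $2$, hence it is quadratic. Applying the criterion then yields that $\oV$ is Koszul, which proves the corollary.

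I do not expect a genuine obstacle here: the substance was the confluence computation for the critical pair $(\bx \circ_2 \bx)\circ_3 \bx$ carried out above, and what remains is a citation. The only mild point worth checking is that the cited criterion applies verbatim in our setting — a non-symmetric operad with a single \emph{ternary} and \emph{odd} generator — but both the Gröbner machinery and the implication ``quadratic Gröbner basis implies Koszul'' are insensitive to the arity of the generators and to the Koszul sign rule attached to the weight grading, so no modification is needed. As a sanity check one can note independently that the basis $Q$ makes the dimension of $\oV(2n+1)$ equal to the $n$-th Catalan number, which is compatible with the expected (very simple) Koszul dual computed later in the paper.
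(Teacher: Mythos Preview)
Your argument is correct and matches the paper's own proof exactly: invoke the general fact that a quadratic Gröbner basis implies Koszulness (the paper cites \cite[Th.~3.10]{hoffbeck}), which applies thanks to the preceding proposition. The additional remarks you give about applicability to ternary/odd generators and the Catalan sanity check are sound but not needed for the proof.
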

\begin{proof}
  Ceci résulte du fait général qu'une opérade ayant une base de
  Gröbner quadratique est de Koszul, voir par exemple
  \cite[Th. 3.10]{hoffbeck}.
\end{proof}

\subsection{Produits associatifs sur $\oV$}

Par abus de notation, on note aussi $\oV$ la somme directe
\begin{equation}
  \bigoplus_{n \geq 0} \oV(2n+1),
\end{equation}
qui est un groupe abélien doublement gradué par le degré et par le
poids.

On introduit les notations alternatives suivantes :
\begin{equation}
  \label{defo}
  a \over b = (-1)^{ab} b \circ_1 a
\end{equation}
et
\begin{equation}
  \label{defs}
  a * b = a \circ_{\max} b.
\end{equation}

\begin{proposition}
  \label{valise}
  Les produits $\over$ et $*$ sont deux produits associatifs sur $\oV$, qui
  vérifient de plus la relation de compatibilité suivante :
  \begin{equation}
    (a \over b) * c = a \over (b * c).
  \end{equation}
\end{proposition}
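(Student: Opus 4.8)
The plan is to unfold the two products through their definitions \eqref{defo} and \eqref{defs} and to reduce each of the three claimed identities to a single application of an operad axiom, \eqref{axiome_a} or \eqref{axiome_c}, while carefully tracking the Koszul signs. The only preliminary remark needed is that the composition maps are homogeneous for the weight grading, so that in every sign below a composite may be replaced by the sum of the weights of its factors; for instance $(-1)^{a(b * c)} = (-1)^{ab}(-1)^{ac}$, with the usual abuse of writing the weight of an element for the element itself in an exponent of $-1$.

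The associativity of $*$ is immediate: since $\#(a * b) = \#a + \#b - 1$, axiom \eqref{axiome_a} with $i = \#a$ and $j = \#b$ gives $(a * b) * c = (a \circ_{\#a} b) \circ_{\#a + \#b - 1} c = a \circ_{\#a}(b \circ_{\#b} c) = a * (b * c)$, and no sign appears because $\circ_{\max}$ never grafts onto the first input. For the associativity of $\over$, unfolding \eqref{defo} twice yields
\[ (a \over b) \over c = (-1)^{ab + ac + bc}\, c \circ_1 (b \circ_1 a), \qquad a \over (b \over c) = (-1)^{ab + ac + bc}\, (c \circ_1 b) \circ_1 a, \]
and the two right-hand sides agree by \eqref{axiome_a} with $i = j = 1$.

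There remains the compatibility relation, which is the one point deserving attention. Using $\#(a \over b) = \#a + \#b - 1$ and then applying \eqref{axiome_c} with $i = 1 < j = \#b$ — the match of the grafting index $\#a + \#b - 1$ with $j + \#a - 1$ forcing $j = \#b$ — one obtains
\[ (a \over b) * c = (-1)^{ab}\,(b \circ_1 a) \circ_{\#a + \#b - 1} c = (-1)^{ab}(-1)^{ac}\,(b \circ_{\#b} c) \circ_1 a = (-1)^{ab+ac}\,(b * c) \circ_1 a, \]
whereas directly $a \over (b * c) = (-1)^{a(b + c)}\,(b * c) \circ_1 a = (-1)^{ab + ac}\,(b * c) \circ_1 a$, the same element. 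The main (and essentially only) obstacle is thus purely a matter of bookkeeping: recognizing the left-hand side of the compatibility relation as an instance of the sign-twisted exchange axiom \eqref{axiome_c} with the correct identification of the grafting indices, and then checking that the sign $(-1)^{ac}$ it produces combines with the sign $(-1)^{ab}$ coming from \eqref{defo} to reproduce exactly the sign $(-1)^{a(b+c)}$ demanded by the other side.
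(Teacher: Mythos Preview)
Your proof is correct and follows exactly the approach indicated in the paper, which merely states that associativity comes from axiom \eqref{axiome_a} and the compatibility from axiom \eqref{axiome_c}; you have simply unfolded this one-line argument and tracked the Koszul signs explicitly. The only minor point is that your use of \eqref{axiome_c} tacitly assumes $\#b>1$ (so that $i=1<j=\#b$), but this is also implicit in the paper's proof and is the only case relevant to the subsequent applications.
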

\begin{proof}
  L'associativité résulte de l'axiome \eqref{axiome_a} et la
  compatibilité de l'axiome \eqref{axiome_c} des opérades.
\end{proof}

La définition récursive \eqref{defi_Q} de la base $Q$ se traduit, pour
un arbre $z$ qui se décompose en une paire $(x,y)$ d'arbres, en
fonction de $\over$ et $*$ :
\begin{equation}
  \label{redef_Q}
  Q_z = Q_x \over \bx * Q_y,
\end{equation}
où l'on peut omettre les parenthèses par la proposition \ref{valise}.

On introduit deux opérations combinatoires sur les arbres binaires
plans, voir figure \ref{overunder} pour un exemple. Soient $x,y$ deux arbres binaires plans. 

L'arbre binaire plan $x \over y $ est obtenu par greffe à gauche de
$x$ sur $y$, \textit{i.e.} en identifiant l'arête incidente à la
racine de $x$ avec l'arête incidente à la feuille la plus à gauche de
$y$.

L'arbre binaire plan $x \under y $ est obtenu par greffe à droite de $y$
sur $x$, \textit{i.e.} en identifiant l'arête incidente à la racine de
$y$ avec l'arête incidente à la feuille la plus à droite de $x$.

\begin{figure}
  \begin{center}
    \scalebox{0.3}{\includegraphics{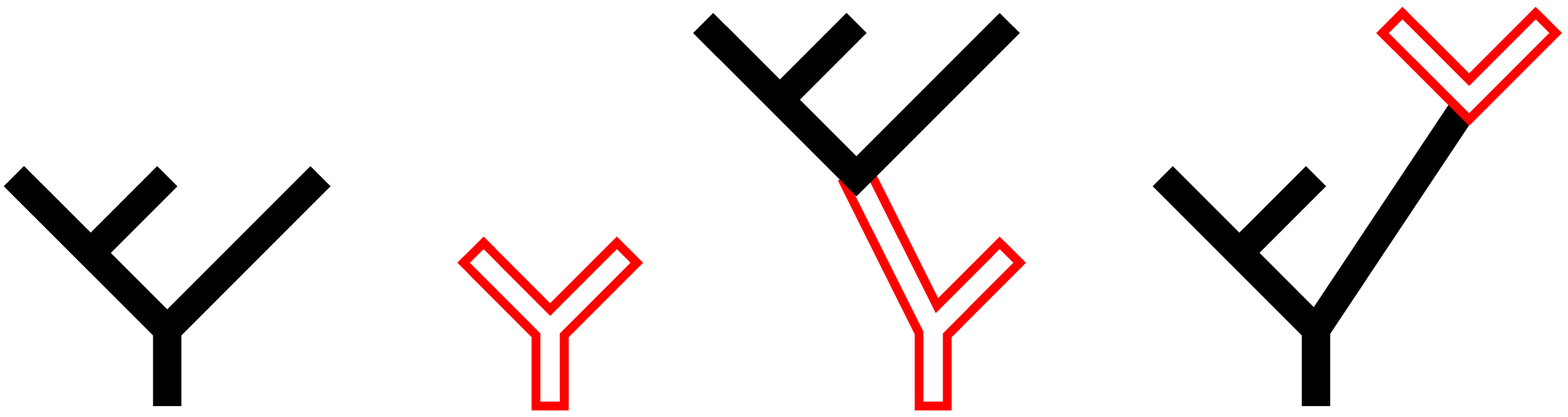}}
    \caption{Les opérations combinatoires $\over$ et $\under$ : $x$, $y$, $x \over y$ et $x \under y$}
    \label{overunder}
  \end{center}
\end{figure}

\begin{remark}
  On a la relation $ (x \over y) \under z = x \over (y \under z)$.
\end{remark}

\begin{lemma}
  \label{descri_prod_Q}
  On a la description suivante des opérations $\over$ et $*$ dans la base $Q$ :
  \begin{align*}
    Q_{x\under y} & = Q_x * Q_y,\\
    Q_{x\over y}& = Q_x \over Q_y.
  \end{align*}
  La famille $Q$ est donc close pour $\over$ et $*$.
\end{lemma}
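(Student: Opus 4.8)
The plan is to prove both identities by induction on the total number of internal vertices of the trees involved, using the recursive definition \eqref{redef_Q} of the basis $Q$ together with the associativity and compatibility of the products $\over$ and $*$ established in Proposition~\ref{valise}. The base cases are handled by noting that $Q_{|}=\un$ is a two-sided unit for both products (since $\un \circ_1 a = a$ and $a \circ_{\max} \un = a$), so grafting onto or from the trivial tree is absorbed correctly on both sides.

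\medskip

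First I would treat $Q_{x\under y} = Q_x * Q_y$. Write $x$ as a pair $(x_1, x_2)$; then $x \under y$ is obtained by grafting $y$ onto the rightmost leaf of $x$, which by the definition of $\under$ amounts to replacing the right subtree $x_2$ by $x_2 \under y$, i.e. $x \under y$ decomposes as the pair $(x_1, x_2 \under y)$. Applying \eqref{redef_Q} gives $Q_{x\under y} = Q_{x_1} \over \bx * Q_{x_2 \under y}$, and by the induction hypothesis $Q_{x_2 \under y} = Q_{x_2} * Q_y$. Using associativity of $*$ and then \eqref{redef_Q} again for $Q_x = Q_{x_1} \over \bx * Q_{x_2}$, one rewrites $Q_{x_1} \over \bx * Q_{x_2} * Q_y = Q_x * Q_y$. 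The only subtlety is the bookkeeping for the case where $x = \una$ is a single internal vertex (so $x_1 = x_2 = |$), where one checks directly that $x \under y$ is the tree $(\,|\,, y)$ and $Q_{x\under y} = \bx * Q_y = Q_{\una} * Q_y$.

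\medskip

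Next I would treat $Q_{x\over y} = Q_x \over Q_y$, symmetrically but using the other decomposition. Write $y$ as a pair $(y_1, y_2)$; grafting $x$ onto the leftmost leaf of $y$ means replacing the left subtree $y_1$ by $x \over y_1$, so $x \over y$ decomposes as the pair $(x \over y_1, y_2)$. Then \eqref{redef_Q} gives $Q_{x\over y} = Q_{x\over y_1} \over \bx * Q_{y_2}$, and the induction hypothesis yields $Q_{x \over y_1} = Q_x \over Q_{y_1}$. Now associativity of $\over$ lets one write $Q_x \over Q_{y_1} \over \bx * Q_{y_2}$, and the compatibility relation $(a \over b)*c = a \over (b*c)$ from Proposition~\ref{valise} moves the $*Q_{y_2}$ inside to give $Q_x \over (Q_{y_1} \over \bx * Q_{y_2}) = Q_x \over Q_y$, again using \eqref{redef_Q} for $Q_y$. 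The base case $y = \una$ is checked directly as before.

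\medskip

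The main obstacle I anticipate is purely combinatorial rather than algebraic: verifying that the two grafting operations $\over$ and $\under$ interact with the canonical pair-decomposition of a binary tree in exactly the way the inductive step requires — that is, that $x \under y$ genuinely decomposes as $(x_1, x_2 \under y)$ and $x \over y$ as $(x \over y_1, y_2)$. This is a matter of unwinding the definitions (grafting at the rightmost/leftmost leaf commutes with splitting off the root edge in the appropriate subtree), and once it is in place the algebra is a direct consequence of \eqref{redef_Q}, Proposition~\ref{valise}, and the unit property of $\un = Q_{|}$. The closure statement then follows immediately: since every element of $\Y_n$ arises from smaller trees via $\over$ and $\under$, the set $Q$ is stable under both $\over$ and $*$.
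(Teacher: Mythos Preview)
Your proposal is correct and follows exactly the route the paper indicates: an induction using the recursive definition \eqref{redef_Q} together with the associativity and compatibility relations of Proposition~\ref{valise}. The paper's own proof is the one-line statement ``Ceci résulte facilement de la définition \eqref{redef_Q} de $Q_x$ et de la proposition \ref{valise}, par récurrence'', and you have simply unpacked that récurrence in full detail, including the combinatorial checks on how $\under$ and $\over$ interact with the pair-decomposition at the root.
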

\begin{proof}
  Ceci résulte facilement de la définition \eqref{redef_Q} de $Q_x$ et de la proposition \ref{valise}, par récurrence.
\end{proof}
\subsection{Caractérisation de $\thetav$}

\begin{proposition}
  \label{carac1}
  L'application $\thetav$ vérifie
  \begin{equation}
   \label{eqcarac1}
     \thetav(a \over b)= - \thetav(a) * \thetav(b).
  \end{equation}
\end{proposition}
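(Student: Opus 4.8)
The plan is to unwind the definition of the product $\over$ in terms of the operadic composition $\circ_1$, apply the anticyclic axiom \eqref{cyclique2}, and keep track of the Koszul signs. First I would use \eqref{defo} to write $a \over b = (-1)^{ab} b \circ_1 a$, so that by linearity of $\thetav$ one has $\thetav(a \over b) = (-1)^{ab}\,\thetav(b \circ_1 a)$. It suffices to treat $a$ and $b$ homogeneous for the weight grading and extend by linearity at the end.

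Next I would apply the axiom \eqref{cyclique2} to the composition $b \circ_1 a$, which gives
\begin{equation*}
  \thetav(b \circ_1 a) = -(-1)^{ba}\,\thetav(a) \circ_{\max} \thetav(b).
\end{equation*}
Substituting back yields $\thetav(a \over b) = -(-1)^{ab}(-1)^{ba}\,\thetav(a) \circ_{\max} \thetav(b)$. Here I would note that $\thetav$ is a morphism of weight-graded abelian groups, so it preserves the weight (and the degree); hence the weight of $\thetav(a)$ equals that of $a$, and likewise for $b$. Since $ab$ and $ba$ denote the same integer (the product of the two weights), the two sign factors multiply to $(-1)^{2ab} = 1$, and we are left with $\thetav(a \over b) = -\,\thetav(a) \circ_{\max} \thetav(b)$. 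Finally, recognizing $\circ_{\max}$ as the product $*$ via \eqref{defs} gives exactly \eqref{eqcarac1}.

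There is no genuine obstacle here: the argument is a one-line computation once the definitions are unfolded, and the only thing requiring a moment's care is the sign bookkeeping (in particular checking that the two $(-1)$-factors coming from \eqref{defo} and from \eqref{cyclique2} cancel, which relies on $\thetav$ preserving weight). I would also emphasize that, unlike the descriptions of $\over$ and $*$ on the basis $Q$ in Lemma \ref{descri_prod_Q}, this identity needs neither induction nor the Gröbner basis: it holds for arbitrary elements $a, b \in \oV$ and is a formal consequence of the anticyclic axioms together with the definition \eqref{defo} of $\over$.
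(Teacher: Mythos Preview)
Your proof is correct and follows exactly the approach indicated in the paper: unfold the definition \eqref{defo} of $\over$, apply the anticyclic axiom \eqref{cyclique2}, then recognize $\circ_{\max}$ as $*$ via \eqref{defs}. The only difference is that you spell out the sign bookkeeping that the paper leaves implicit.
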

\begin{proof}
  On utilise le second axiome des opérades anticycliques \eqref{cyclique2}
  et les définitions \eqref{defo} et \eqref{defs} des produits $\over$
  et $*$.
\end{proof}

\begin{proposition}
  \label{carac2}
  L'application $\thetav$ vérifie
  \begin{equation}
    \label{eqcarac2}
     \thetav(\bx * (a \over (\bx * b))) =  \thetav(\bx * a)\over \thetav(\bx *b) - \thetav(\bx *a) * \thetav(\bx *b).
  \end{equation}
\end{proposition}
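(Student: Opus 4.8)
The plan is to compute both sides directly, using only the anticyclic axioms \eqref{cyclique1}--\eqref{cyclique2}, the operad axioms \eqref{axiome_a}--\eqref{axiome_c}, the single relation \eqref{relation_V}, and the definitions \eqref{defo}--\eqref{defs}. The key preliminary observation is a formula for $\thetav(\bx * c)$ valid for every $c$: since $\bx * c = \bx \circ_{\max} c = \bx \circ_3 c$ and the composition index $3$ is $>1$, axiom \eqref{cyclique1} together with $\thetav(\bx)=-\bx$ gives $\thetav(\bx * c) = -\bx \circ_2 c$. Applying this with $c=\bx*a$, $c=\bx*b$ and $c=a\over(\bx*b)$ reduces the whole identity to a statement about iterated $\circ_i$-compositions of three copies of $\bx$ (with $a$ and $b$ inserted), with no further reference to $\thetav$.

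First I would transform the left-hand side. Using \eqref{defo} to write $a \over (\bx*b) = (-1)^{a+ab}(\bx\circ_3 b)\circ_1 a$, and then applying \eqref{axiome_a} twice to move the inner compositions outward, one gets
\[
\thetav\bigl(\bx*(a\over(\bx*b))\bigr) = -\bx\circ_2\bigl(a\over(\bx*b)\bigr) = -(-1)^{a+ab}\,\bigl((\bx\circ_2\bx)\circ_4 b\bigr)\circ_2 a .
\]
Now the relation \eqref{relation_V} enters, in the form $\bx\circ_2\bx = \bx\circ_1\bx + \bx\circ_3\bx$, splitting the left-hand side into two pieces. For the $\bx\circ_3\bx$ piece, \eqref{axiome_a} gives $(\bx\circ_3\bx)\circ_4 b = \bx\circ_3(\bx\circ_2 b)$; for the $\bx\circ_1\bx$ piece, \eqref{axiome_c} gives $(\bx\circ_1\bx)\circ_4 b = (-1)^{b}(\bx\circ_2 b)\circ_1\bx$, and one further application of \eqref{axiome_a} turns $\bigl((\bx\circ_2 b)\circ_1\bx\bigr)\circ_2 a$ into $(\bx\circ_2 b)\circ_1(\bx\circ_2 a)$.

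Next I would bring the right-hand side into the same shape. Substituting $\thetav(\bx*a)=-\bx\circ_2 a$ and $\thetav(\bx*b)=-\bx\circ_2 b$ into \eqref{defs}, and using $\#(\bx\circ_2 a)=\#a+2$, axiom \eqref{axiome_c} yields $\thetav(\bx*a)*\thetav(\bx*b) = (-1)^{a+ab}\bigl(\bx\circ_3(\bx\circ_2 b)\bigr)\circ_2 a$, which is exactly the $\bx\circ_3\bx$ piece of the left-hand side, up to the sign carried by that term in the statement. Similarly, \eqref{defo} gives $\thetav(\bx*a)\over\thetav(\bx*b) = (-1)^{(1+a)(1+b)}(\bx\circ_2 b)\circ_1(\bx\circ_2 a)$, which matches the $\bx\circ_1\bx$ piece once one records that $(-1)^{(1+a)(1+b)} = -(-1)^{a+b+ab}$. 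Comparing the two surviving terms then gives \eqref{eqcarac2}.

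The only real obstacle is the sign bookkeeping: one must apply the Koszul sign rule consistently (keeping in mind that $\bx$ has odd weight $1$, so $(-1)^{\bx c}=(-1)^{c}$), choose correctly between \eqref{axiome_a} and \eqref{axiome_c} as the slot indices shift under each composition, and check that the accumulated signs on each of the two terms agree with those prescribed by \eqref{defo}--\eqref{defs}. There is no conceptual difficulty once the auxiliary identity $\thetav(\bx*c)=-\bx\circ_2 c$ and the single substitution coming from \eqref{relation_V} are in place.
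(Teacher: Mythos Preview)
Your argument is correct and is essentially the same as the paper's own proof: both compute the left-hand side by writing $\thetav(\bx*c)=-\bx\circ_2 c$ via \eqref{cyclique1}, then use the relation \eqref{relation_V} to split $\bx\circ_2\bx$ into the two pieces $\bx\circ_1\bx$ and $\bx\circ_3\bx$, and finally identify these with the two terms of the right-hand side via \eqref{defo}--\eqref{defs}. The only difference is presentational: you isolate the preliminary identity $\thetav(\bx*c)=-\bx\circ_2 c$ and compute both sides separately, whereas the paper runs the computation from the left-hand side through to the form $(-1)^{ab+a+b+1}\thetav(\bx\circ_3 b)\circ_1\thetav(\bx\circ_3 a)-\thetav(\bx\circ_3 a)\circ_{\max}\thetav(\bx\circ_3 b)$ and then translates back into $\over$ and~$*$.
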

\begin{proof}
  En revenant aux expressions en les compositions $\circ_i$, on calcule
  le terme de gauche  en utilisant \eqref{cyclique1}:
  \begin{align*}
    (-1)^{ab+a} \thetav(\bx \circ_3 ((\bx \circ_3 b) \circ_1 a)) &=  -(-1)^{ab+a}\bx \circ_2 ((\bx \circ_3 b) \circ_1 a) \\ &=  -(-1)^{ab+a} ((\bx \circ_2 \bx) \circ_4 b) \circ_2 a.
  \end{align*}
  En utilisant la relation \eqref{relation_V} pour réécrire $\bx \circ_2 \bx$, on obtient
  \begin{equation*}
    -(-1)^{ab+a} ((\bx \circ_1 \bx) \circ_4 b) \circ_2 a  -(-1)^{ab+a}  ((\bx \circ_3 \bx) \circ_4 b) \circ_2 a.
  \end{equation*}
  Par application des axiomes d'opérades, ceci vaut
  \begin{equation*}
    (-1)^{ab+a+b+1} (\bx \circ_2 b) \circ_1 (\bx \circ_2 a) - (\bx \circ_2 a) \circ_{\max} (\bx \circ_2 b).
  \end{equation*}
  En utilisant l'axiome \eqref{cyclique1} des opérades anticycliques, on obtient
  \begin{equation*}
    (-1)^{ab+a+b+1} \thetav(\bx \circ_3 b) \circ_1 \thetav(\bx \circ_3 a) - \thetav(\bx \circ_3 a) \circ_{\max} \thetav(\bx \circ_3 b).
  \end{equation*}
  En repassant aux formules utilisant $\over$ et $*$, on trouve bien
  le second terme voulu.
\end{proof}

\begin{proposition}
  \label{carac_theta_Q}
  Les applications $\thetav$ sont uniquement déterminés par les conditions \eqref{eqcarac1} et \eqref{eqcarac2} et les conditions initiales
  \begin{equation}
    \thetav(\un)=-\un \quad \text{et} \quad    \thetav(\bx)=-\bx.
  \end{equation}
\end{proposition}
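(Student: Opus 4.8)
Le plan est de raisonner par récurrence sur le nombre $n$ de sommets internes, en s'appuyant sur le fait que $(Q_x)_{x \in \Y_n}$ est une base de $\oV(2n+1)$ : comme les applications $\thetav$ sont $\ZZ$-linéaires, il suffira d'établir que les conditions de l'énoncé déterminent chaque valeur $\thetav(Q_z)$. Les cas $n=0$ et $n=1$ ne sont rien d'autre que les deux conditions initiales, puisque $Q_{|}=\un$ et $Q_{\una}=\bx$.

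Pour l'hérédité, on fixerait $n\geq 2$ et un arbre binaire plan $z$ à $n$ sommets internes, que l'on décomposerait en la paire $(x,y)$, de sorte que $Q_z = Q_x \over \bx * Q_y$ d'après \eqref{redef_Q}. Si $x$ n'est pas l'arbre trivial, on appliquerait \eqref{eqcarac1} avec $a=Q_x$ et $b=\bx * Q_y$, d'où
\[
  \thetav(Q_z) = -\,\thetav(Q_x) * \thetav(\bx * Q_y) .
\]
Or $\bx * Q_y = Q_{\una \under y}$ par le lemme \ref{descri_prod_Q}, et les arbres $x$ et $\una \under y$ ont chacun strictement moins de $n$ sommets internes : en effet $\una \under y$ en compte un de plus que $y$, tandis que $n$ dépasse d'une unité la somme des nombres de sommets internes de $x$ et de $y$. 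Ainsi $Q_x$ et $Q_{\una \under y}$ appartiennent à des composantes $\oV(2m+1)$ avec $m<n$, sur lesquelles $\thetav$ est déjà déterminé par hypothèse de récurrence, et $\thetav(Q_z)$ l'est donc aussi.

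Si $x$ est l'arbre trivial, alors $Q_x=\un$, donc $Q_z = \un \over \bx * Q_y = \bx * Q_y$ et \eqref{eqcarac1} n'apporte aucune information. Le sous-cas où $y$ est lui aussi trivial correspond à $z=\una$, donc à $n=1$, exclu ici ; on décomposerait donc $y$ en une paire $(u,v)$, ce qui donne $Q_y = Q_u \over \bx * Q_v$ et $Q_z = \bx * (Q_u \over (\bx * Q_v))$, le parenthésage étant licite par la proposition \ref{valise}. On appliquerait alors \eqref{eqcarac2} avec $a=Q_u$ et $b=Q_v$ : elle exprime $\thetav(Q_z)$ au moyen de $\thetav(\bx * Q_u) = \thetav(Q_{\una \under u})$ et $\thetav(\bx * Q_v) = \thetav(Q_{\una \under v})$, toujours par le lemme \ref{descri_prod_Q}. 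Ces deux arbres ont encore strictement moins de $n$ sommets internes, puisqu'ils en comptent un de plus que $u$, resp. que $v$, et que $n$ dépasse de deux la somme des nombres de sommets internes de $u$ et de $v$ ; on conclut comme dans le premier cas.

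Le point à surveiller est que, dans chacun des deux cas, les éléments de base figurant au second membre vivent bien dans une composante $\oV(2m+1)$ avec $m<n$, ce qui assure que la récurrence est fondée ; cela résulte simplement de ce que la greffe $\una \under w$ n'ajoute qu'un sommet interne à $w$. L'idée de fond — et la seule véritable subtilité — est que \eqref{eqcarac1} permet d'abaisser le degré pour tous les arbres $z$ sauf ceux dont le sous-arbre gauche est trivial ; pour ces derniers, $Q_z$ prend la forme $\bx * Q_y$, qui échappe à \eqref{eqcarac1} mais à laquelle \eqref{eqcarac2} est exactement taillée.
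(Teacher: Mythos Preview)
Your proof is correct and follows essentially the same approach as the paper's: both argue by induction on the number of internal vertices, using the root decomposition $Q_z = Q_x \over \bx * Q_y$, distinguish whether the left subtree $x$ is trivial, and in the trivial case further decompose $y$ at its root to bring \eqref{eqcarac2} into play. Your write-up is in fact slightly more explicit than the paper's in checking that the recursion is well-founded (the inequalities on the numbers of internal vertices of $\una\under y$, $\una\under u$, $\una\under v$).
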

\begin{proof}
  Tout d'abord, la valeur de $\thetav(Q_{|})$ pour l'arbre trivial $|$
  est fixé par la première condition initiale, qui fait partie de la
  définition d'une opérade anticyclique.

  On utilise ensuite le fait combinatoire élémentaire suivant. Soit
  $x$ un arbre non trivial. Alors ou bien $x$ peut s'écrire $y \over
  z$ pour deux arbres non triviaux $y$ et $z$, ou bien $x$ peut
  s'écrire $\una \under z$ où $\una$ est l'unique arbre à un seul
  sommet et $z$ est un arbre éventuellement trivial.

  Par conséquent, et par le lemme \ref{descri_prod_Q}, tout élément
  $Q_x$ pour $x$ non trivial peut s'écrire soit comme $Q_y \over Q_z$
  soit comme $\bx * Q_z$ avec les conditions précédemment décrites.

  Si $Q_x$ s'écrit $Q_y \over Q_z$, on peut définir $\thetav$ par
  récurrence par l'équation \eqref{eqcarac1}.

  Sinon $Q_x$ s'écrit $\bx * Q_z$. Si $z$ est trivial, on utilise la
  seconde condition initiale pour définir $\thetav(Q_{\una})$. Sinon,
  on utilise la remarque combinatoire suivante : tout arbre $z$ non
  trivial peut s'écrire
  \begin{equation}
    z = v \over (\una \under w),
  \end{equation}
  où $\una$ est l'unique arbre à un sommet et $v,w$ sont deux arbres
  éventuellement triviaux. Par conséquent, on a
  \begin{equation}
    Q_z = Q_v \over (\bx * Q_w).
  \end{equation}
  On utilise alors \eqref{eqcarac2} pour définir $\thetav(Q_x)$.
\end{proof}

\section{Posets de Tamari}

\subsection{Définition et modules}

Soit $n$ un entier positif ou nul et $\Y_n$ l'ensemble des arbres
binaires plans à $n$ sommets internes. Dans \cite{tamari}, D. Tamari a
défini un ordre partiel sur l'ensemble $\Y_n$ comme suit.

Un arbre $x$ est inférieur ou égal à un arbre $y$ ($x \leq y$) si on
passe de $y$ à $x$ (dans cet ordre) par une suite de mouvements locaux
qui changent une configuration ``arête droite'' en une configuration
``arête gauche'' selon le modèle de la figure \ref{mouvement}.

\begin{figure}
  \begin{center}
    \scalebox{0.15}{\includegraphics{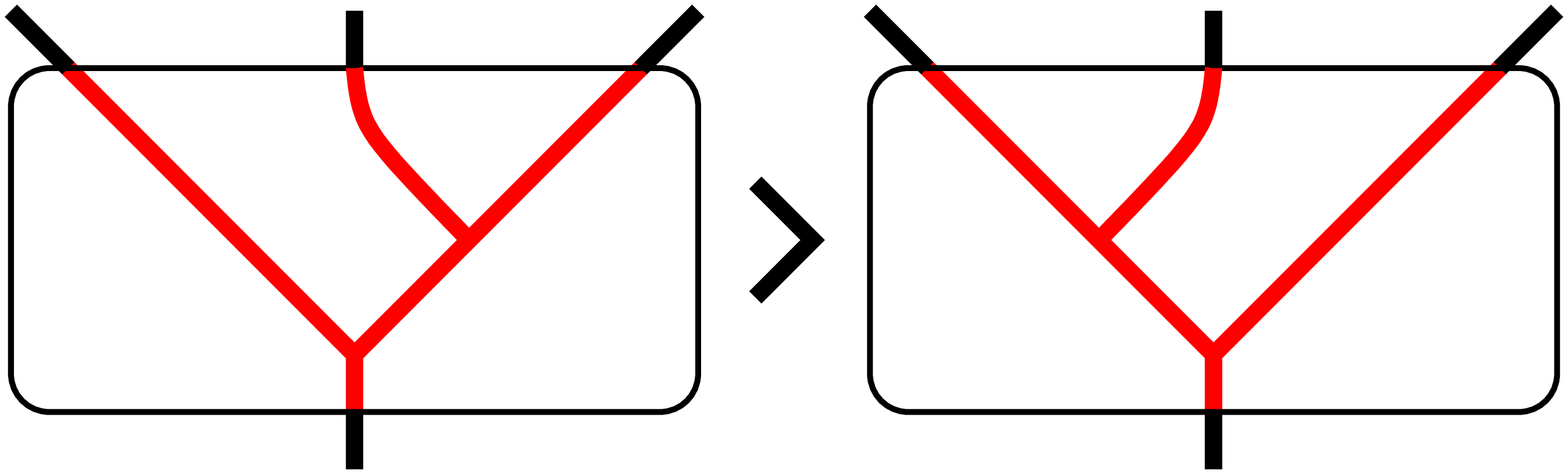}}
    \caption{Changement local : $y > x$}
    \label{mouvement}
  \end{center}
\end{figure}

Friedman et Tamari \cite{friedman-tamari} ont montré que ces posets
sont des treillis. Une autre preuve a été donnée dans \cite{huang-tamari}.

\begin{figure}
  \begin{center}
    \scalebox{0.15}{\includegraphics{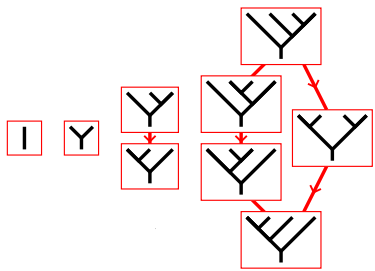}}
    \caption{Treillis de Tamari (minimum en bas)}
    \label{treillis}
  \end{center}
\end{figure}


On renvoie le lecteur aux références
\cite{ladkani_poset,curtis-reiner,happel,lenzing} pour la théorie des
représentations des carquois, des posets et des algèbres de dimension
finie.

On peut considérer le diagramme de Hasse de $\Y_n$ comme un carquois-avec-relations, en orientant les arêtes du maximum vers le minimum, et
en imposant comme relations l'égalité de toute paire de chemins ayant
mêmes débuts et fins.

On fixe désormais un corps de base $\mathsf{k}$ pour les modules sur
les carquois. Les constructions qui suivent en dépendent peu et
la dépendance en $\mathsf{k}$ sera implicite.

On considère alors la catégorie $\mod \Y_n$ des modules sur le
carquois-avec-relations $\Y_n$. Cette catégorie est équivalente à la
catégorie des modules sur l'algèbre d'incidence de $\Y_n$. C'est une
catégorie abélienne de dimension globale finie.

On dispose de trois bases de l'anneau de Grothendieck $K_0(\mod \Y_n)$
données par les classes des modules projectifs, injectifs et
simples sur $\Y_n$, notées respectivement $P_x,I_x$ et $S_x$ pour $x \in\Y_n$.

La base $S$ des simples est reliée à la base $P$ des projectifs par la
formule
\begin{equation}
  P_x = \sum_{y \leq x} S_y,
\end{equation}
et à la base $I$ des injectifs par la formule
\begin{equation}
  I_x = \sum_{y \geq x} S_y.
\end{equation}

\subsection{Structures algébriques}

On rappelle dans cette section des structures algébriques qui sont
déjà bien étudiées, notamment dans les articles
\cite{hnt,loday-ronco,aguiar-sottile,arithmetree}. Pour cette raison,
on ne donne que des indications de preuves.

On considère maintenant la somme directe
\begin{equation}
  \bigoplus_{n \geq 0} K_0(\mod \Y_n)
\end{equation}
et plusieurs produits sur ce groupe abélien.

On a tout d'abord deux produits associatifs $\over$ et $\under$
définis sur la base $S$ par
\begin{align}
  \label{defi_S_o}
  S_x \over S_y &= S_{x\over y},\\
  \label{defi_S_u}
  S_x \under S_y &= S_{x\under y}.
\end{align}

On a par ailleurs un autre produit associatif $*$, qui est le produit
associatif de l'algèbre dendriforme libre. On peut le définir par la
caractérisation suivante, due à Loday et Ronco \cite{loday-ronco}.

\begin{proposition}
  \label{produit_intervalle}
  Soient $x,y$ deux arbres. Alors
  \begin{equation}
    S_x * S_y = \sum_{x\over y \leq z \leq x \under y} S_z.
  \end{equation}
\end{proposition}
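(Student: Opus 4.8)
The plan is to prove the formula $S_x * S_y = \sum_{x\over y \leq z \leq x\under y} S_z$ by induction on the number of internal vertices of $x$ and $y$, exploiting the fact that the product $*$ on $\bigoplus_n K_0(\mod\Y_n)$ together with the two products $\over$ and $\under$ satisfies the same compatibilities as in the dendriform setting, and that these three products are already determined on the basis $S$ by the recursive structure of binary trees. First I would record the base case: if either $x$ or $y$ is the trivial tree $|$, then $x\over y = x\under y = $ the nontrivial one of the two (or $|$ itself), the interval $[x\over y, x\under y]$ is a single point, and the formula reduces to $S_x * S_| = S_x$ and $S_| * S_y = S_y$, which holds since $S_|$ is the unit for $*$.

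For the inductive step, I would write $x$ and $y$ via their canonical decompositions and use the known recursive rule for the dendriform product: writing $y = y_1 \over (\una \under y_2)$ (left branch $y_1$, right branch $y_2$ of $y$, read through the grafting operations), one has the dendriform recursion $S_x * S_y = (S_x * S_{y_1}) \over \bigl(\bx\cdot\text{-part}\bigr) \under \dots$ — more precisely the standard identity expressing $*$ in terms of $\over$, $\under$ and the products applied to strictly smaller trees. Substituting the inductive hypothesis for the smaller products turns the right-hand side into a sum of $S_z$ over a union of intervals, and the combinatorial heart of the argument is to check that this union telescopes exactly to the interval $[x\over y,\ x\under y]$ in the Tamari order $\Y_{m+n}$.

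The main obstacle is therefore purely order-theoretic: one must show that the Tamari interval $[x\over y, x\under y]$ decomposes, compatibly with the tree decompositions of $x$ and $y$, into the pieces produced by the recursion, i.e. that $z$ lies in $[x\over y, x\under y]$ if and only if $z$ admits a decomposition matching one of the inductive terms. I would handle this by giving an explicit description of the trees $z$ with $x\over y \leq z \leq x\under y$: these are exactly the trees obtained from the ordered forest of subtrees of $x$ and of $y$ by choosing how to interleave them along the "right arm" joining the root-region of $x$ to that of $y$, and then checking that each such $z$ is covered once and only once. The coherence relation $(a\over b)\under c = a\over(b\under c)$ (noted in the Remark) and the associativity of all three products guarantee that the bracketing ambiguities in the recursion do not matter.

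Finally I would note that, since this statement and its proof are classical (due to Loday–Ronco, with the interval description appearing in \cite{loday-ronco}), it suffices to indicate this induction and refer to \cite{hnt,loday-ronco,aguiar-sottile,arithmetree} for the detailed verification of the interval telescoping, as the paper's own conventions ("on ne donne que des indications de preuves") already allow.
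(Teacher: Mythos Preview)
Your proposal is more than the paper itself provides: in the paper this proposition is not proved at all. It is introduced as the characterization of the associative product $*$ of the free dendriform algebra \emph{due to Loday and Ronco}, with an explicit citation to \cite{loday-ronco}, and the surrounding paragraph warns that in this section ``on ne donne que des indications de preuves''. So the paper's ``proof'' is simply the attribution; your final paragraph anticipates this correctly.

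As for the sketch you give before that, it is in the right spirit --- the Loday--Ronco argument does proceed by a dendriform recursion on one of the factors and a combinatorial identification of the resulting pieces with a partition of the Tamari interval $[x\over y,\,x\under y]$ --- but the middle of your write-up is too loose to stand on its own: the phrase ``the standard identity expressing $*$ in terms of $\over$, $\under$ and the products applied to strictly smaller trees'' is never made precise, and the ``interval telescoping'' step is asserted rather than carried out. If you wanted a self-contained proof you would need to state explicitly the dendriform splitting (e.g.\ $S_x * S_y = S_x \prec S_y + S_x \succ S_y$ with the recursive descriptions of $\prec$ and $\succ$ on trees) and then verify the interval decomposition; but since the paper is content with the citation, your last paragraph alone already matches what is required.
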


On peut par ailleurs donner du produit $*$ une description
combinatoire alternative, voir \cite[Prop. 5.11]{loday-dialgebras}
pour un énoncé qui implique celui ci-dessous.

\begin{proposition}
  \label{shuffle_arbre}
  Le produit $S_x *S_y$ est la somme des $S_z$ où $z$ décrit les
  arbres binaires plans obtenus en identifiant le coté droit de $x$
  avec le coté gauche de $y$ par un homéomorphisme croissant qui
  n'envoie pas de sommet interne sur un sommet interne.
\end{proposition}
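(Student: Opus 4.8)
The plan is to prove the equality by induction on $\# x + \# y$, using the recursive description of the associative product $*$ coming from the structure of the free dendriform algebra on one generator. Write $t_1 \vee t_2$ for the planar binary tree that decomposes as the pair $(t_1,t_2)$, an operation extended bilinearly to the basis $S$. I would start from the classical recursion (see \cite{loday-dialgebras,loday-ronco,arithmetree}): with the convention $S_{|} * S_t = S_t = S_t * S_{|}$, one has, for $x = x_1 \vee x_2$ and $y = y_1 \vee y_2$,
\begin{equation*}
  S_x * S_y = S_{x_1} \vee (S_{x_2} * S_y) + (S_x * S_{y_1}) \vee S_{y_2},
\end{equation*}
the two terms on the right being the dendriform half-products $S_x \prec S_y$ and $S_x \succ S_y$.

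Next I would put the combinatorial side into the same recursive shape. Let $A_1,\dots,A_p$ be the left subtrees hanging from the internal vertices of the rightmost branch of $x$, listed from the root towards the rightmost leaf, and $B_1,\dots,B_q$ the right subtrees hanging from the internal vertices of the leftmost branch of $y$, listed from the root towards the leftmost leaf. An increasing homeomorphism from the right side of $x$ onto the left side of $y$ sending no internal vertex to an internal vertex is exactly the datum of a shuffle of the sequence $(A_1,\dots,A_p)$ with the sequence $(B_1,\dots,B_q)$: the resulting tree $z$ has a spine of length $p+q$ joining its root to the common image of the two glued leaves, along which the $A_i$ hang on the left and the $B_j$ hang on the right, in the order fixed by the shuffle; there are thus $\binom{p+q}{p}$ such trees. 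The key point is that $A_1 = x_1$ as soon as $x = x_1 \vee x_2$, and $B_1 = y_2$ as soon as $y = y_1 \vee y_2$. Hence the lowest internal vertex of the spine of a shuffle-tree $z$ of $x$ and $y$ carries either $x_1$ on its left, in which case $z = x_1 \vee z'$ with $z'$ a shuffle-tree of $x_2$ and $y$, or $y_2$ on its right, in which case $z = z'' \vee y_2$ with $z''$ a shuffle-tree of $x$ and $y_1$; these two situations are mutually exclusive and exhaustive (exclusive because $\# x_1 < \# x$ prevents $x_1$ from being a shuffle-tree of $x$ and $y_1$).

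With these two ingredients the induction is immediate. The base cases $x = |$ and $y = |$ are trivial: the relevant branch is empty, the only shuffle-tree is $y$ (resp.\ $x$), and the statement reduces to $S_{|} * S_t = S_t = S_t * S_{|}$. For $x$ and $y$ nontrivial, the induction hypothesis identifies the first family of shuffle-trees with the set of the $x_1 \vee z$ for $S_z$ a term of $S_{x_2} * S_y$, that is, with the trees appearing in $S_{x_1} \vee (S_{x_2} * S_y)$, and likewise the second family with the trees appearing in $(S_x * S_{y_1}) \vee S_{y_2}$; summing, the two families being disjoint, and invoking the recursion above, one gets that the shuffle-trees of $x$ and $y$ are exactly the trees whose class appears, with coefficient $1$, in $S_x * S_y$.

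The step I expect to require the most care is the geometric bookkeeping in the inductive step: one must check that grafting $x_1$ on the left of the lowest spine vertex of a shuffle-tree $z'$ of $x_2$ and $y$ genuinely yields a shuffle-tree of $x$ and $y$, i.e.\ that the new spine is still the superposition of the right side of $x$ with the left side of $y$; this follows from the fact that the rightmost leaf of $x$ lies in $x_2$ and the leftmost leaf of $y$ in $y_2$, so that the spine of $z'$, prolonged downward by the new vertex, still joins the root to the image of the rightmost leaf of $x$. One could also bypass this recursion and deduce the statement from Proposition \ref{produit_intervalle}, by checking that the shuffle-trees of $x$ and $y$ form exactly the Tamari interval $[x \over y, x \under y]$, the two extreme shuffles (all subtrees of $x$ before those of $y$, and conversely) giving the endpoints.
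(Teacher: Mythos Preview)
Your inductive argument is correct. The recursion
\[
  S_x * S_y = S_{x_1} \vee (S_{x_2} * S_y) + (S_x * S_{y_1}) \vee S_{y_2}
\]
is exactly the splitting of $*$ into the two dendriform half-products on the Loday--Ronco algebra, and your combinatorial reading of a ``shuffle-tree'' as a shuffle of the left subtrees along the right branch of $x$ with the right subtrees along the left branch of $y$ is the right one; the dichotomy on the lowest spine vertex then matches the recursion term by term. The exclusivity argument via counting internal vertices is fine (and could be replaced, as you implicitly note, by the observation that the lowest spine vertex comes from exactly one of $x$ or $y$).

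As for comparison with the paper: the paper does not actually give a proof here. It only points to \cite[Prop.~5.11]{loday-dialgebras} for a more general statement implying this one, in the spirit of the surrounding section where the dendriform structures on planar binary trees are recalled without proof. Your write-up is precisely the standard self-contained argument one extracts from that reference, so there is no substantive divergence in method---you have simply unpacked the citation. The alternative route you mention at the end, via Proposition~\ref{produit_intervalle} and the identification of the shuffle-trees with the Tamari interval $[x\over y,\, x\under y]$, is also valid and is in fact closer to how the paper uses these facts together (cf.\ the proof of Proposition~\ref{descri_prod_P}).
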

 
L'élément $S_{|}$ est une unité pour les trois produits $\over$, $\under$ et $*$.

\begin{proposition}
  \label{descri_prod_P}
  On a la description suivante des opérations $\over$ et $*$ dans la base $P$ :
  \begin{align}
    P_{x\under y}&=P_x * P_y,\\
    P_{x\over y}&=P_x \over P_y.
  \end{align}
\end{proposition}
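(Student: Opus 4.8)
The plan is to reduce both identities to the corresponding statements in the basis $S$, where the products $\over$, $\under$ and $*$ have explicit combinatorial descriptions (equations \eqref{defi_S_o}, \eqref{defi_S_u} and Proposition \ref{produit_intervalle}), and then to transport these through the change-of-basis formula $P_x = \sum_{y \leq x} S_y$. So the first step is to recall that $P_x = \sum_{y \leq x} S_y$, where the sum runs over all $y$ in $\Y_n$ below $x$ in the Tamari order, and to observe that expanding $P_{x \under y}$ in the $S$-basis amounts to enumerating the interval $[\,|\,, x \under y\,]$ of the Tamari lattice on $n+m$ internal vertices.

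For the first identity $P_{x \under y} = P_x * P_y$, I would expand the right-hand side: $P_x * P_y = \bigl(\sum_{x' \leq x} S_{x'}\bigr) * \bigl(\sum_{y' \leq y} S_{y'}\bigr) = \sum_{x' \leq x,\, y' \leq y} S_{x'} * S_{y'}$, and then apply Proposition \ref{produit_intervalle} to each term, obtaining a sum of $S_z$ over all $z$ with $x' \over y' \leq z \leq x' \under y'$ for some $x' \leq x$ and $y' \leq y$. The claim then reduces to the combinatorial fact that, as $x'$ ranges over $[\,|\,,x\,]$ and $y'$ over $[\,|\,,y\,]$, the union of the intervals $[\,x' \over y',\, x' \under y'\,]$ is exactly the interval $[\,|\,,\, x \under y\,]$, with no $z$ counted twice. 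The "no repetition" part is the crucial point: one needs that for each $z \leq x \under y$ there is a \emph{unique} pair $(x',y')$ with $x' \leq x$, $y' \leq y$ and $x' \over y' \leq z \leq x' \under y'$. This should follow from the structure of the Tamari order: any tree $z$ with $z \leq x \under y$ has a canonical decomposition relative to the grafting $x \under y$, recovering $x'$ as the part of $z$ lying "over $x$" and $y'$ as the part lying "under $y$", and one checks that $\,|\, \leq x' \leq x$, $\,|\, \leq y' \leq y$. This is essentially the Loday--Ronco description of the dendriform structure, and is the same kind of argument already used in the references \cite{hnt,loday-ronco,aguiar-sottile}, so I would cite those for the combinatorial input.

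For the second identity $P_{x \over y} = P_x \over P_y$, the same method applies and is in fact easier: expanding $P_x \over P_y = \sum_{x' \leq x,\, y' \leq y} S_{x'} \over S_{y'} = \sum_{x' \leq x,\, y' \leq y} S_{x' \over y'}$ by \eqref{defi_S_o}, the claim becomes that the map $(x',y') \mapsto x' \over y'$ is a bijection from $[\,|\,,x\,] \times [\,|\,,y\,]$ onto $[\,|\,,\, x \over y\,]$. Since the grafting $x' \over y'$ splits a tree uniquely along the leftmost branch, this bijectivity is immediate from the compatibility of $\over$ with the Tamari order (the left-grafting operation is an order embedding in each argument, and its image is precisely the set of trees whose decomposition along the relevant edge lies below $(x,y)$).

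I expect the main obstacle to be the bijectivity/no-overcounting statement underlying the first identity, i.e. showing that distinct pairs $(x',y')$ in $[\,|\,,x\,]\times[\,|\,,y\,]$ contribute disjoint sets of $S_z$'s and that together they cover $[\,|\,,x\under y\,]$ exactly once. Everything else is a routine bilinear expansion. Since the proposition is stated in a section whose preamble explicitly says that these structures are already well studied and that only indications of proof will be given, I would keep the argument at the level of "this follows from Proposition \ref{produit_intervalle}, the change of basis $P_x = \sum_{y\le x} S_y$, and the standard combinatorics of the Tamari order established in \cite{hnt,loday-ronco}", rather than spelling out the interval bijection in full detail.
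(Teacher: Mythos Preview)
Your proposal is correct, and for the second identity $P_{x\over y}=P_x\over P_y$ it is essentially the same as the paper's: both reduce to the bijection $(x',y')\mapsto x'\over y'$ between $\{x'\leq x\}\times\{y'\leq y\}$ and $\{z\leq x\over y\}$.

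For the first identity your route diverges from the paper's. You expand $P_x*P_y$ via the \emph{interval} description of Proposition~\ref{produit_intervalle}, so that the heart of the argument becomes the fact that the intervals $[x'\over y',\,x'\under y']$ for $(x',y')\in\Y_n\times\Y_m$ partition $\Y_{n+m}$, together with the compatibility of this decomposition with the Tamari order (namely that the unique pair $(x',y')$ attached to $z$ satisfies $x'\leq x$ and $y'\leq y$ precisely when $z\leq x\under y$). The paper instead expands $P_x*P_y$ via the \emph{shuffle} description of Proposition~\ref{shuffle_arbre}, reformulating the identity as the equivalence of two conditions on a tree $z$: condition~\textbf{A} ($z\leq x\under y$) and condition~\textbf{B} ($z$ arises as a right-border/left-border gluing of some $x'\leq x$ with some $y'\leq y$). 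It then checks \textbf{A}$\Rightarrow$\textbf{B} by stability of \textbf{B} under the local Tamari moves, and \textbf{B}$\Rightarrow$\textbf{A} by observing that any such gluing $z$ satisfies $z\leq x'\under y'\leq x\under y$. Your approach isolates the multiplicity-one statement cleanly and ties it to the known partition property of the Loday--Ronco intervals, which is conceptually pleasant; the paper's approach avoids invoking that partition explicitly, trading it for a direct stability argument along covering relations. Both are adequate at the level of ``indications de preuves'' announced in the section preamble, and both ultimately rest on the same circle of combinatorial facts from \cite{loday-ronco,hnt}.
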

\begin{proof}
  Pour la première formule, il s'agit de voir que les arbres $z$ qui
  sont inférieurs ou égaux à l'arbre $x\under y$ (condition
  \textbf{A}) sont exactement ceux qui s'obtiennent par recollement du
  bord droit d'un arbre inférieur à $x$ avec le bord gauche d'un arbre
  inférieur à $y$ (condition \textbf{B}). Il est facile de déduire de
  la forme locale de l'ordre de Tamari que la condition \textbf{B} est
  stable par diminution dans le poset de Tamari, donc que la condition
  \textbf{A} entraîne la condition \textbf{B}. Réciproquement, étant
  donné un arbre $z$ vérifiant la condition \textbf{B}, on peut
  l'écrire comme un des termes de $S_{x'} * S_{y'}$ avec $x' \leq x$
  et $y' \leq y$. En appliquant une suite convenable de mouvements
  locaux définissant l'ordre de Tamari, on peut montrer que $z \leq x'
  \under y' $ Comme $ x' \under y' \leq x \under y$, ceci entraîne
  que $z$ vérifie la condition \textbf{A}.

  Montrons la seconde formule, en établissant l'énoncé suivant : pour
  tous $x,y$, on a une bijection
  \begin{align*}
   \{ x' \mid x'\leq x\} \times \{y' \mid y' \leq y\}  &\simeq \{ z \mid z \leq x\over y\}\\
           (x',y') &\mapsto x'\over y'.
  \end{align*}
  Le fait que $x' \over y' \leq x\over y$ résulte de la nature locale
  de l'ordre de Tamari. L'injectivité est claire. Pour montrer la
  surjectivité, il suffit de voir que l'existence pour $z$ d'une
  décomposition de la forme souhaitée entraîne l'existence d'une telle
  décomposition pour tout $z'$ couvert par $z$. On conclut en partant
  de la décomposition de $x \over y$.
\end{proof}

En utilisant l'unique anti-automorphisme des
treillis de Tamari, on obtient la proposition suivante.

\begin{proposition}
  \label{descri_prod_I}
  On a la description suivante des opérations $\under$ et $*$ dans la base $I$:
  \begin{align}
    I_{x\over y}&=I_x * I_y,\\
    I_{x\under y}&=I_x \under I_y.
  \end{align}
\end{proposition}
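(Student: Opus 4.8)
The plan is to deduce this proposition from Proposition \ref{descri_prod_P} by transporting everything through the unique anti-automorphism of the Tamari lattices. First I would make precise the anti-automorphism: for each $n$ the Tamari lattice $\Y_n$ admits a unique order-reversing bijection $\sigma \colon \Y_n \to \Y_n$, which combinatorially is the mirror-symmetry of planar binary trees (reflecting in a vertical axis). The key combinatorial facts to record are that $\sigma$ exchanges the left-grafting and right-grafting operations, $\sigma(x \over y) = \sigma(y) \under \sigma(x)$ and $\sigma(x \under y) = \sigma(y) \over \sigma(x)$, and that it fixes the trivial tree; these follow at once from the description of $\over$ and $\under$ as grafting on the leftmost, respectively rightmost, leaf, together with the left-right symmetry of the local Tamari move in Figure \ref{mouvement}.

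Next I would translate $\sigma$ into an equivalence on module categories. Because $\sigma$ reverses the order, it identifies the incidence algebra of $\Y_n$ with its opposite, hence induces a duality (contravariant equivalence) $\mod \Y_n \simeq \mod \Y_n$; at the level of the Grothendieck group this gives an involution that, by the defining formula $P_x = \sum_{y \le x} S_y$ and $I_x = \sum_{y \ge x} S_y$, sends $S_x \mapsto S_{\sigma(x)}$ and exchanges the projective and injective bases: $P_x \mapsto I_{\sigma(x)}$ and $I_x \mapsto P_{\sigma(x)}$. One also needs that this involution intertwines the product $*$ with its \emph{opposite}; this is immediate from Proposition \ref{produit_intervalle}, since $\sigma$ reverses the order and exchanges $x \over y$ with $\sigma(y) \under \sigma(x)$ and $x \under y$ with $\sigma(y) \over \sigma(x)$, so that the interval $[x \over y,\, x \under y]$ is carried onto $[\sigma(y) \over \sigma(x),\, \sigma(y) \under \sigma(x)]$.

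With these dictionaries in hand the proof is a direct computation. For the first formula I start from Proposition \ref{descri_prod_P}, $P_{u \under v} = P_u * P_v$, apply the involution to both sides, and use the translations above: the left side becomes $I_{\sigma(u \under v)} = I_{\sigma(v) \over \sigma(u)}$, while the right side becomes $I_{\sigma(v)} * I_{\sigma(u)}$ (the factors swap because the involution reverses the order of $*$). Setting $x = \sigma(v)$, $y = \sigma(u)$ and using that $\sigma$ is a bijection yields $I_{x \over y} = I_x * I_y$. The second formula is obtained in exactly the same way from $P_{u \over v} = P_u \over P_v$, using that $\sigma$ exchanges $\over$ and $\under$: the image reads $I_{\sigma(v) \under \sigma(u)} = I_{\sigma(v)} \under I_{\sigma(u)}$, i.e.\ $I_{x \under y} = I_x \under I_y$.

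The only point requiring genuine care — and the place I would expect to spend most of the argument — is the verification that the contravariant equivalence really does implement all three transformations simultaneously on $K_0$ with the correct bookkeeping: that $S$ goes to $S \circ \sigma$, that $P$ and $I$ are interchanged rather than each being preserved, and that $*$ becomes its opposite rather than staying unchanged. Once these three facts are nailed down (the first from the $S$-to-$P$/$I$ change-of-basis formulas, the last from Proposition \ref{produit_intervalle}), everything else is formal substitution. An alternative, purely combinatorial route avoiding module categories altogether would be to mimic the proof of Proposition \ref{descri_prod_P} verbatim, replacing ``$\leq$'' by ``$\geq$'' and swapping the roles of the left and right boundaries of trees throughout, using Proposition \ref{produit_intervalle} and Proposition \ref{shuffle_arbre}; I would keep this as a fallback in case the categorical phrasing is judged to need more detail than a one-line invocation of the anti-automorphism can carry.
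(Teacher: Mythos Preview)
Your proposal is correct and follows exactly the approach the paper uses: the paper's entire proof is the single sentence ``En utilisant l'unique anti-automorphisme des treillis de Tamari, on obtient la proposition suivante,'' and you have simply unpacked what that anti-automorphism does to the bases $S$, $P$, $I$ and to the products $\over$, $\under$, $*$. Your fallback combinatorial argument is also sound but unnecessary here, since the paper is content with the one-line invocation.
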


On note $\theta$ la transformation de Coxeter du poset $\Y_n$. C'est
un endomorphisme de $K_0(\mod \Y_n)$ qui provient d'un endofoncteur
$\tau_{AR}$ de la catégorie dérivée de $\mod \Y_n$.

L'endomorphisme $\theta$ est défini sur $K_0(\mod \Y_n)$ par la formule
\begin{equation}
  \theta(P_x)=-I_x,
\end{equation}
pour tout $x \in \Y_n$.

\begin{proposition}
  \label{carac_theta_P_1}
  Les applications $\theta$ vérifient
  \begin{align}
    \theta(P_{\una})&=-P_{\una},\\
    \theta(a \over b)&=-\theta(a) * \theta(b),\\
    \theta(a * b)&= - \theta(a) \under \theta(b).
  \end{align}
\end{proposition}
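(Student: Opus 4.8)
The plan is to verify all three identities on the basis $P$ of projective modules; this suffices because the three products are $\ZZ$-bilinear, $\theta$ is $\ZZ$-linear, and all three operations preserve the grading by the number of internal vertices, so that $\theta$, extended componentwise to $\bigoplus_n K_0(\mod \Y_n)$, interacts with the products componentwise. The only inputs needed are the defining formula $\theta(P_x) = -I_x$, the description of $\over$ and $*$ in the basis $P$ (Proposition~\ref{descri_prod_P}), and the description of $\under$ and $*$ in the basis $I$ (Proposition~\ref{descri_prod_I}).

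For the first identity, I observe that $\Y_1$ consists of the single tree $\una$, so $K_0(\mod \Y_1)$ has rank one and its unique indecomposable module is at once projective and injective; hence $P_{\una} = I_{\una}$ and $\theta(P_{\una}) = -I_{\una} = -P_{\una}$.

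For the second identity, set $a = P_x$ and $b = P_y$. The left-hand side is $\theta(P_x \over P_y) = \theta(P_{x \over y}) = -I_{x \over y}$, using Proposition~\ref{descri_prod_P} and then the definition of $\theta$; the right-hand side is $-\theta(P_x) * \theta(P_y) = -(-I_x)*(-I_y) = -(I_x * I_y) = -I_{x \over y}$, using the definition of $\theta$ and then Proposition~\ref{descri_prod_I}. The third identity is obtained the same way: with $a = P_x$, $b = P_y$, the left-hand side is $\theta(P_x * P_y) = \theta(P_{x \under y}) = -I_{x \under y}$ by Proposition~\ref{descri_prod_P}, and the right-hand side is $-\theta(P_x) \under \theta(P_y) = -(-I_x) \under (-I_y) = -(I_x \under I_y) = -I_{x \under y}$ by Proposition~\ref{descri_prod_I}.

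There is no real obstacle here. The single point to keep in mind is that, unlike the operadic products on $\oV$, the products $\over$, $*$ and $\under$ on $\bigoplus_n K_0(\mod \Y_n)$ carry no Koszul signs, so that the sign cancellations $(-1)\cdot(-1) = 1$ in the computations above are valid as written; everything else is a direct substitution.
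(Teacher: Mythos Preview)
Your proof is correct and follows exactly the same route as the paper: verify each identity on basis elements $P_x$, using $\theta(P_x)=-I_x$, Proposition~\ref{descri_prod_P} to rewrite $P_x \over P_y$ and $P_x * P_y$, and Proposition~\ref{descri_prod_I} to rewrite the resulting injectives. Your additional remarks on bilinearity and the absence of Koszul signs are sound but not strictly needed, since the paper's argument is the same one-line chain of equalities in each case.
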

\begin{proof}
  La première formule pour $P_{\una}$ est immédiate, car $P_{\una}=I_{\una}$.

  Pour la seconde formule, on calcule
  \begin{equation*}
    \theta(P_x \over P_y)=\theta(P_{x \over y})=-I_{x \over y}=- I_x * I_y=-\theta(P_x) * \theta(P_y).
  \end{equation*}

  Pour la troisième formule, on calcule
  \begin{equation*}
    \theta(P_x * P_y)=\theta(P_{x \under y})=-I_{x \under y}=- I_x \under I_y= - \theta(P_x) \under \theta(P_y).
  \end{equation*}
\end{proof}

\begin{lemma}
  \label{lemme_dendri}
  On a
  \begin{equation}
    (P_{\una} \under a)*(P_{\una} \under b)=P_{\una} \under (a * (P_{\una} \under b))+(P_{\una} \under a)\over (P_{\una} \under b).
  \end{equation}
\end{lemma}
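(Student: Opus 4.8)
The plan is to recognize this identity as a disguised instance of one of the three defining relations of a dendriform algebra. By bilinearity of $\over$, $\under$ and $*$ it suffices to treat $a = S_x$, $b = S_y$ for planar binary trees $x,y$. I would first recall the tree realization of the free dendriform algebra on the generator $\una$, with operations $\prec,\succ$ and associative product $* = \prec + \succ$: writing a nontrivial tree as $t = t^{l}\vee t^{r}$ (grafting of two subtrees onto a new root), one has $t\prec s = t^{l}\vee(t^{r}* s)$ and $t\succ s = (t * s^{l})\vee s^{r}$, with the trivial tree $|$ acting as a unit (this is Loday's description, see \cite{loday-dialgebras,loday-ronco}). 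From these formulas one extracts two combinatorial facts: (i) $\una\prec s = |\vee s = \una\under s$, so that $P_{\una}\under a = \una\prec a$ in the $S$-basis; and (ii) if $s$ has trivial left subtree, $s = |\vee s^{r}$, then $c\succ s = c\vee s^{r} = c\over s$, i.e.\ $\over$ coincides with $\succ$ whenever its right argument lies in the image of $P_{\una}\under(-)$.

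Granting (i) and (ii) the computation is short. Using $* = \prec + \succ$ and (i),
\[
(P_{\una}\under a)*(P_{\una}\under b) = (\una\prec a)\prec(\una\prec b) + (\una\prec a)\succ(\una\prec b).
\]
The first summand equals $\una\prec\bigl(a*(\una\prec b)\bigr)$ by the dendriform relation $(u\prec v)\prec w = u\prec(v* w)$ applied with $u=\una$, $v=a$, $w=\una\prec b$, hence equals $P_{\una}\under\bigl(a*(P_{\una}\under b)\bigr)$ by (i). The second summand equals $(P_{\una}\under a)\over(P_{\una}\under b)$ by (ii), since $P_{\una}\under b$ has trivial left subtree. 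Adding the two expressions yields exactly the claimed formula.

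The only genuine work is establishing the dictionary (i)--(ii) between the combinatorial graftings $\over,\under$ and the dendriform operations $\prec,\succ$; this is classical but demands care with the (sign-free) conventions in force here and with the side on which each grafting acts. If one prefers to stay within the poset language, one can instead argue directly from Proposition~\ref{produit_intervalle}: with $P_{\una}\under S_x = S_{|\vee x}$ the identity reduces to the statement that the Tamari interval $\bigl[(|\vee x)\vee y,\ |\vee(x\under(|\vee y))\bigr]$ is the disjoint union of the singleton $\{(|\vee x)\vee y\}$ and the image under $z\mapsto |\vee z$ of the interval $\bigl[x\vee y,\ x\under(|\vee y)\bigr]$. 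The obstacle there is to check, using the local form of the Tamari covering relation, that $z\mapsto |\vee z$ identifies the latter interval with the set of trees in the former interval having trivial left subtree. I would present the dendriform derivation as the main proof.
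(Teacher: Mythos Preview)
Your proof is correct and, at its core, is the same argument as the paper's: both split the product $*$ according to which factor contributes the lowest internal vertex, which is precisely the dendriform decomposition $* = \prec + \succ$. The paper phrases this combinatorially via the shuffle description of $*$ in the $S$-basis (Proposition~\ref{shuffle_arbre}), observing directly that the terms where the bottom vertex comes from $P_{\una}\under a$ assemble into $(P_{\una}\under a)\over(P_{\una}\under b)$ and the remaining ones into $P_{\una}\under(a*(P_{\una}\under b))$; you instead name the two halves as $\succ$ and $\prec$ and invoke the dendriform axiom $(u\prec v)\prec w = u\prec(v*w)$ together with the identifications (i)--(ii). Your dictionary (i)--(ii) is exactly what the paper's combinatorial reasoning establishes implicitly, so the two proofs are the same argument in different clothing; your presentation has the advantage of making the dendriform structure explicit, while the paper's stays closer to the tree combinatorics already set up.
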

\begin{proof}
  Ceci résulte de la description des trois produits $\over$, $\under $
  et $*$ dans la base $S$, voir \eqref{defi_S_o}, \eqref{defi_S_u} et
  la proposition \ref{shuffle_arbre}. Le produit $(P_{\una} \under
  a)*(P_{\una} \under b)$ se coupe naturellement en deux termes selon
  que le sommet interne inférieur provienne de $P_{\una} \under a$ ou
  de $P_{\una} \under b$.
\end{proof}

\begin{proposition}
  \label{carac_theta_P_2}
  On a 
  \begin{equation}
    \theta(P_{\una} * (a \over (P_{\una} * b))) = \theta(P_{\una} * a)\over \theta(P_{\una} *b) - \theta(P_{\una} *a) * \theta(P_{\una} *b).
  \end{equation}
\end{proposition}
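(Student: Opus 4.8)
The plan is to mimic, on the Tamari side, the computation that established Proposition \ref{carac2}: peel off the products one at a time using the three relations of Proposition \ref{carac_theta_P_1}, reduce to a dendriform-type identity, and close the argument with Lemma \ref{lemme_dendri}. The only nontrivial input is that lemma; everything else is a bookkeeping of the signs produced by $\theta(P_{\una})=-P_{\una}$.

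First I would record the auxiliary identity $\theta(P_{\una}*c)=-\theta(P_{\una})\under\theta(c)=P_{\una}\under\theta(c)$, valid for any $c$ by the third relation of Proposition \ref{carac_theta_P_1}. Applying it with $c=a\over(P_{\una}*b)$ turns the left-hand side of the proposition into $P_{\una}\under\theta\bigl(a\over(P_{\una}*b)\bigr)$. Then the second relation of Proposition \ref{carac_theta_P_1} gives $\theta\bigl(a\over(P_{\una}*b)\bigr)=-\theta(a)*\theta(P_{\una}*b)$, and using the auxiliary identity once more with $c=b$ we get $\theta(P_{\una}*b)=P_{\una}\under\theta(b)$. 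Combining these, and using linearity of $\under$, the left-hand side equals $-\,P_{\una}\under\bigl(\theta(a)*(P_{\una}\under\theta(b))\bigr)$.

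Next I would invoke Lemma \ref{lemme_dendri}, which is an identity valid for all arguments, with $a$ and $b$ replaced by $\theta(a)$ and $\theta(b)$; rewriting it as $P_{\una}\under\bigl(\theta(a)*(P_{\una}\under\theta(b))\bigr)=(P_{\una}\under\theta(a))*(P_{\una}\under\theta(b))-(P_{\una}\under\theta(a))\over(P_{\una}\under\theta(b))$ and substituting into the previous line, the left-hand side becomes $(P_{\una}\under\theta(a))\over(P_{\una}\under\theta(b))-(P_{\una}\under\theta(a))*(P_{\una}\under\theta(b))$. Finally, applying the auxiliary identity $\theta(P_{\una}*a)=P_{\una}\under\theta(a)$ (and its analogue for $b$) to both summands yields exactly the right-hand side of the proposition. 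I do not foresee any real obstacle here: the argument is purely formal, the one point deserving a word being that Lemma \ref{lemme_dendri} holds for arbitrary arguments so the substitution $a\mapsto\theta(a)$, $b\mapsto\theta(b)$ is legitimate, and that all the signs coming from $\theta(P_{\una})=-P_{\una}$ cancel in pairs, leaving the single overall $-1$ that flips the two terms into the order displayed in the statement.
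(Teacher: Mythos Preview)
Your proposal is correct and follows essentially the same route as the paper: reduce the left-hand side to $-\,P_{\una}\under\bigl(\theta(a)*(P_{\una}\under\theta(b))\bigr)$ via Proposition~\ref{carac_theta_P_1}, rewrite the right-hand side as $(P_{\una}\under\theta(a))\over(P_{\una}\under\theta(b))-(P_{\una}\under\theta(a))*(P_{\una}\under\theta(b))$ by the same auxiliary identity, and conclude by Lemma~\ref{lemme_dendri}. The only cosmetic difference is that the paper expands both sides separately before invoking the lemma, whereas you apply the lemma to the left-hand side and then recognise the right-hand side.
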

\begin{proof}
  On calcule le terme de gauche :
  \begin{equation*}
    -\theta(P_{\una}) \under \theta(a \over (P_{\una} * b)) = - \theta(P_{\una}) \under (\theta(a) * (\theta(P_{\una}) \under \theta(b))),
  \end{equation*}
  soit
  \begin{equation*}
    - P_{\una} \under (\theta(a) * (P_{\una} \under \theta(b))).
  \end{equation*}
  On calcule le terme de droite :
  \begin{equation*}
    (\theta(P_{\una})\under \theta(a))\over (\theta(P_{\una})\under \theta(b))-(\theta(P_{\una})\under \theta(a)) * (\theta(P_{\una})\under \theta(b)),
  \end{equation*}
  soit
  \begin{equation*}
    (P_{\una}\under \theta(a))\over (P_{\una}\under \theta(b))-(P_{\una}\under \theta(a)) * (P_{\una}\under \theta(b)).
  \end{equation*}
  Par le lemme \ref{lemme_dendri}, on déduit l'égalité voulue.
\end{proof}

\section{Isomorphisme}

Pour tout $n\geq 0$, on définit une application linéaire $\psi$ de
$\oV(2n+1)$ dans $K_0(\mod \Y_n)$ par
\begin{equation}
  \psi(Q_x) = P_x,
\end{equation}
pour tout arbre binaire plan $x$.

\begin{proposition}
  L'application $\psi$ est un isomorphisme de groupes abéliens.
\end{proposition}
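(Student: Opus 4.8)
Le plan est de vérifier que $\psi$ réalise une bijection entre deux bases, ce qui en fait automatiquement un isomorphisme de groupes abéliens libres. Je commencerais par rappeler que, d'après la proposition précédente, la famille $(Q_x)_{x\in\Y_n}$ est une base du groupe abélien $\oV(2n+1)$, qui est donc libre de rang $|\Y_n|$. J'invoquerais ensuite le fait, déjà mentionné dans la section précédente, que les classes $(P_x)_{x\in\Y_n}$ des modules projectifs forment une base de $K_0(\mod\Y_n)$ ; si besoin, je le redémontrerais en observant que les simples $(S_x)_{x\in\Y_n}$ en forment une base et que le changement de base $P_x=\sum_{y\leq x}S_y$ a pour matrice, dans tout ordre total raffinant l'ordre de Tamari, une matrice triangulaire à coefficients diagonaux égaux à $1$, donc inversible sur $\ZZ$.

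Une fois ces deux points en main, la conclusion est immédiate : $\psi$ est l'application linéaire qui envoie la base $(Q_x)_{x\in\Y_n}$ de l'espace de départ sur la famille $(P_x)_{x\in\Y_n}$ de l'espace d'arrivée, cette dernière étant une base indexée par le même ensemble fini $\Y_n$ ; une application linéaire entre groupes abéliens libres transformant une base en une base est un isomorphisme.

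Je ne m'attends à aucun obstacle réel : l'énoncé est une conséquence formelle du travail déjà accompli, à savoir la preuve (par bases de Gröbner) que $Q$ est une base de $\oV$, et la propriété classique selon laquelle les projectifs forment une base du groupe de Grothendieck d'une algèbre d'incidence de dimension finie et de dimension globale finie. Le seul point méritant une ligne de justification sera le caractère unitriangulaire — et donc l'inversibilité sur $\ZZ$ — de la matrice reliant les bases $P$ et $S$.
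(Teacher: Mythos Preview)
Your proposal is correct and follows essentially the same approach as the paper: the paper's proof is the single line ``En effet, l'ensemble $(P_x)_{x \in \Y_n}$ est une base de $K_0(\mod \Y_n)$'', which is exactly the key point you identify. Your additional remark on the unitriangular relation between the $P_x$ and the $S_x$ is a harmless elaboration of something the paper treats as standard, but it adds nothing beyond what is already implicit.
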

\begin{proof}
  En effet, l'ensemble $(P_x)_{x \in \Y_n}$ est une
  base de $K_0(\mod \Y_n) $.
\end{proof}

\begin{proposition}
  L'application $\psi$ est un morphisme pour $\over$ et $*$.
\end{proposition}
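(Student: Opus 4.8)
The plan is to reduce everything to the two structural descriptions already in hand: Lemma~\ref{descri_prod_Q}, which computes $\over$ and $*$ on the basis $Q$ of $\oV$, and Proposition~\ref{descri_prod_P}, which computes the same two products on the basis $P$ of $K_0(\mod \Y_n)$. Since $\psi$ is linear and $(Q_x)_{x\in\Y_n}$ is a basis of $\oV(2n+1)$ for every $n$, and since the induced map on $\bigoplus_{n\geq 0}\oV(2n+1)\to\bigoplus_{n\geq 0}K_0(\mod\Y_n)$ is still bijective and sends $Q_x$ to $P_x$, it suffices to check the two identities
\[
  \psi(a\over b)=\psi(a)\over\psi(b),\qquad \psi(a*b)=\psi(a)*\psi(b)
\]
on pairs of basis elements $a=Q_x$, $b=Q_y$.

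So first I would take arbitrary planar binary trees $x,y$ and compute, using Lemma~\ref{descri_prod_Q}, that $Q_x\over Q_y=Q_{x\over y}$ and $Q_x * Q_y=Q_{x\under y}$. Applying $\psi$ and its defining rule $\psi(Q_z)=P_z$ then gives $\psi(Q_x\over Q_y)=P_{x\over y}$ and $\psi(Q_x*Q_y)=P_{x\under y}$. Finally I would invoke Proposition~\ref{descri_prod_P} in the reverse direction: $P_{x\over y}=P_x\over P_y=\psi(Q_x)\over\psi(Q_y)$ and $P_{x\under y}=P_x * P_y=\psi(Q_x)*\psi(Q_y)$. Chaining these equalities yields exactly the two desired identities, and linearity propagates them to all of $\oV$.

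The only point that deserves a word of care is that the combinatorial grafting operations $\over$ and $\under$ on trees match the degrees correctly on both sides, i.e. that $x\over y$ and $x\under y$ have $\#x+\#y$ internal vertices, so that $Q_{x\over y}$ and $Q_{x\under y}$ genuinely lie in the component $\oV(2(\#x+\#y)+1)$ on which $\psi$ agrees with the $P$-picture; this is immediate from the definition of the graftings. I do not expect any real obstacle here: all the substantive work — in particular any sign bookkeeping coming from the Koszul rule and from~\eqref{defo} — has already been absorbed into the proofs of Lemma~\ref{descri_prod_Q} and Proposition~\ref{descri_prod_P}, and the fact that both of those statements express the products in their respective bases \emph{without any extra sign} is precisely what makes $\psi$ multiplicative on the nose.
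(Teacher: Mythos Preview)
Your proposal is correct and follows exactly the paper's approach: both reduce the claim to the identical combinatorial description of $\over$ and $*$ in the bases $Q$ and $P$, via Lemma~\ref{descri_prod_Q} and Proposition~\ref{descri_prod_P}, and then use that $\psi$ sends $Q_x$ to $P_x$. The paper's proof is a one-line reference to these two results; your version simply unpacks this in more detail.
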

\begin{proof}
  Ceci résulte de la description identique de ces produits dans les
  bases $Q$ et $P$ à l'aide des opérations combinatoires $\over$ et
  $\under$ sur les arbres binaires plans, par les propositions
  \ref{descri_prod_Q} et \ref{descri_prod_P}.
\end{proof}


\begin{theorem}
  \label{theo_idem}
  L'isomorphisme $\psi$ de $\oV(2n+1)$ dans $K_0(\mod \Y_n)$ transforme l'endomorphisme $\thetav$ en l'endomorphisme $ \theta$ : 
  \begin{equation}
    \psi \thetav = \theta \psi.
  \end{equation}
\end{theorem}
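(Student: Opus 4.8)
The plan is to compare, relation by relation, the characterization of $\thetav$ furnished by Proposition~\ref{carac_theta_Q} with the properties of $\theta$ established in Propositions~\ref{carac_theta_P_1} and~\ref{carac_theta_P_2}, transporting everything through $\psi$. Set
\[
\theta' = \psi^{-1} \circ \theta \circ \psi ,
\]
an endomorphism of $\oV(2n+1)$ (and, summing over $n$, of $\oV$). The identity to be proved, $\psi\thetav = \theta\psi$, is equivalent to $\theta' = \thetav$, so by the uniqueness statement of Proposition~\ref{carac_theta_Q} it suffices to check that $\theta'$ satisfies the two relations \eqref{eqcarac1} and \eqref{eqcarac2} together with the initial conditions $\theta'(\un) = -\un$ and $\theta'(\bx) = -\bx$.

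For the initial conditions, $\psi(\un) = \psi(Q_{|}) = P_{|}$ is the class of the projective over the one-element poset $\Y_0$, which coincides with the corresponding injective, so $\theta(P_{|}) = -I_{|} = -P_{|}$ and hence $\theta'(\un) = -\un$; likewise $\psi(\bx) = \psi(Q_{\una}) = P_{\una}$, and the first formula of Proposition~\ref{carac_theta_P_1} gives $\theta(P_{\una}) = -P_{\una}$, so $\theta'(\bx) = -\bx$. Since $\psi$ is an isomorphism that is a morphism for the products $\over$ and $*$, so is $\psi^{-1}$; combining this with the second formula of Proposition~\ref{carac_theta_P_1}, namely $\theta(a' \over b') = -\theta(a') * \theta(b')$, gives
\[
\theta'(a \over b) = \psi^{-1}\theta\bigl(\psi(a) \over \psi(b)\bigr) = \psi^{-1}\bigl(-\theta(\psi a) * \theta(\psi b)\bigr) = -\theta'(a) * \theta'(b),
\]
which is \eqref{eqcarac1}. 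For \eqref{eqcarac2} one uses in addition that $\psi(\bx * c) = \psi(\bx) * \psi(c) = P_{\una} * \psi(c)$, so that Proposition~\ref{carac_theta_P_2}, transported through $\psi^{-1}$, turns $\theta'\bigl(\bx * (a \over (\bx * b))\bigr)$ into $\theta'(\bx * a) \over \theta'(\bx * b) - \theta'(\bx * a) * \theta'(\bx * b)$, which is exactly \eqref{eqcarac2} for $\theta'$.

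By the uniqueness part of Proposition~\ref{carac_theta_Q} it then follows that $\theta' = \thetav$, i.e. $\psi\thetav = \theta\psi$. There is no remaining computational difficulty; the only thing to watch is that the defining data of $\thetav$ in Proposition~\ref{carac_theta_Q} and the properties of $\theta$ in Propositions~\ref{carac_theta_P_1}--\ref{carac_theta_P_2} be genuinely parallel under $\psi$. They are, because $\psi$ matches $\un$ with $P_{|}$ and $\bx$ with $P_{\una}$ and is a morphism for both $\over$ and $*$ --- note that no compatibility of $\psi$ with $\under$ is needed, since $\under$ does not appear in \eqref{eqcarac1} or \eqref{eqcarac2}. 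The substantive work (the combinatorial recursions on planar binary trees and the use of Lemma~\ref{lemme_dendri}) has already been carried out in the proofs of those propositions, so the main step here is simply this matching of characterizations.
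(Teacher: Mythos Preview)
Your proof is correct and follows essentially the same approach as the paper: both arguments transport $\theta$ across $\psi$ (the paper by identifying $Q_x$ with $P_x$, you by explicitly conjugating to form $\theta'$), check the two initial conditions, and then invoke the uniqueness clause of Proposition~\ref{carac_theta_Q} after verifying \eqref{eqcarac1} and \eqref{eqcarac2} via Propositions~\ref{carac_theta_P_1} and~\ref{carac_theta_P_2}. Your version is somewhat more explicit in spelling out the transport along $\psi$, but the logical structure is identical.
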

\begin{proof}
  L'idée de la preuve est la suivante : les morphismes $\thetav$ et
  $\theta$ sont caractérisés par des propriétés similaires qui ne font
  intervenir que les produits $*$ et $\over$.
  
  Pour faciliter la preuve, on identifie $P_x$ avec $Q_x$ pour tout
  $x$. On va donc montrer que $\thetav= \theta$.

  Vérifions d'abord les conditions initiales. Pour $n=0$,
  $\thetav=-\id$ car $\oV$ est une opérade anticyclique et
  $\theta=-\id$. Pour $n=1$, $\thetav=-\id$ et $\theta=-\id$.

  En utilisant les propositions \ref{carac_theta_P_1} et
  \ref{carac_theta_P_2}, on vérifie que la collection de morphismes
  $ \theta$ satisfait, après identification des bases $Q$ et $P$,
  les conditions \eqref{eqcarac1} et \eqref{eqcarac2} qui
  caractérisent $\thetav$ selon la proposition \ref{carac_theta_Q}.
\end{proof}

\section{Application au polynôme caractéristique}

On va utiliser le théorème \ref{theo_idem} et la théorie de la dualité
de Koszul des opérades pour calculer le polynôme caractéristique de la
transformation de Coxeter $\theta$ du poset de Tamari $\Y_n$.

\subsection{Structure cyclique et dualité de Koszul}

Par commodité, on va plutôt utiliser une structure cyclique sur $\oV$,
qui se trouve être donnée par l'opposé de la structure anticyclique.

\begin{proposition}
  On peut munir $\oV$ d'une structure cyclique définie par
  $\gammav(\bx)=\bx$. On a alors la relation globale $\gammav=-\thetav$.
\end{proposition}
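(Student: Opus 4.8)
Le plan est de vérifier directement que $\gammav := -\thetav$, pris composante par composante sur chaque $\oV(2n+1)$, définit une structure d'opérade cyclique sur $\oV$, de constater qu'il a la valeur voulue sur le générateur, puis d'en déduire les deux assertions d'un coup : l'identité globale $\gammav = -\thetav$ est alors la définition même, et l'unicité provient de la théorie générale des opérades cycliques, exactement comme dans le cas anticyclique de la première proposition de la section précédente.

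On commence par les axiomes ne faisant pas intervenir la période. De $\thetav(\un) = -\un$ on tire $\gammav(\un) = \un$, ce qui est la convention de signe du cas cyclique. Pour $i > 1$, la bilinéarité de $\circ_{i-1}$ donne $\gammav(a \circ_i b) = -\bigl(\thetav(a) \circ_{i-1} b\bigr) = \gammav(a) \circ_{i-1} b$, donc \eqref{cyclique1} est vérifié tel quel. Pour $i = 1$, en partant de $\thetav(a \circ_1 b) = -(-1)^{ab}\,\thetav(b) \circ_{\max} \thetav(a)$ et en sortant les deux signes provenant de $\gammav(b)$ et $\gammav(a)$, on obtient $\gammav(a \circ_1 b) = (-1)^{ab}\,\gammav(b) \circ_{\max} \gammav(a)$, qui est exactement la forme cyclique de \eqref{cyclique2} (le signe $-$ de l'axiome anticyclique est absorbé par les deux signes $-$ produits par $-\thetav$). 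La règle des signes de Koszul n'intervient pas ici, $\thetav$ étant de poids $0$, de sorte que le scalaire $-1$ le traverse librement.

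Le seul point qui utilise vraiment la forme de $\oV$ est l'axiome de périodicité, et c'est là l'obstacle (bénin) principal. On a $\gammav^{\,n+1} = (-1)^{n+1}\,\thetav^{\,n+1} = (-1)^{n+1}\,\id$, donc $\gammav^{\,n+1} = \id$ exactement lorsque $n$ est impair. Or $\oV$ est concentrée en arités impaires $n = 2m+1$, donc $n+1 = 2m+2$ est pair et $\gammav^{\,2m+2} = \thetav^{\,2m+2} = \id$ sur $\oV(2m+1)$. C'est ici que le fait que tout se passe en degré $2n+1$ est réellement utilisé.

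Il reste à constater que $\gammav(\bx) = -\thetav(\bx) = \bx$, de sorte que la structure ainsi construite est une structure cyclique avec la normalisation voulue. Son unicité résulte de la théorie générale des opérades cycliques, comme dans la preuve de l'énoncé correspondant pour $\thetav$ : une structure cyclique compatible avec la présentation est déterminée par sa valeur sur $\bx$, l'unique contrainte étant que l'image de la relation \eqref{relation_V} par $\gammav$ soit un multiple de \eqref{relation_V} --- vérification d'une ligne donnant ici le facteur $-1$. Ainsi la structure cyclique définie par $\gammav(\bx) = \bx$ est bien définie, et elle coïncide avec celle que l'on a construite, c'est-à-dire $\gammav = -\thetav$ comme applications sur $\oV = \bigoplus_{n \geq 0} \oV(2n+1)$.
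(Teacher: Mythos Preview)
Your proof is correct and takes essentially the same approach as the paper. The only cosmetic difference is the order: the paper first checks the compatibility of $\gammav(\bx)=\bx$ with \eqref{relation_V} to get existence, and then shows $-\gammav$ is an anticyclic structure agreeing with $\thetav$ on $\bx$ (hence equal to it by uniqueness); you instead define $\gammav:=-\thetav$, verify the cyclic axioms directly (including the periodicity, which is where the oddness of the arities is used), and then invoke uniqueness of the cyclic structure determined by $\gammav(\bx)=\bx$. The key sign computation---that negating an anticyclic structure yields a cyclic one provided all arities are odd so that the periods $n+1$ are even---is identical in both arguments.
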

\begin{proof}
  La vérification de la compatibilité de $\gammav$ avec la relation
  \eqref{relation_V} est immédiate. En effet, l'image par $\gammav$ de cette
  relation est
  \begin{equation*}
    -\bx \circ_3 \bx - \bx \circ_1 \bx + \bx \circ_2 \bx.
  \end{equation*}
  Pour le reste, il suffit de voir que $-\gammav$ définit une
  structure anticyclique qui coïncide avec $\thetav$ sur $\bx$. Les
  deux axiomes d'opérade cyclique pour $\gammav$ impliquent les deux
  axiomes d'opérade anticyclique pour $-\gammav$. De plus, $-\gammav$
  est bien d'ordre $2n+2$ sur l'espace $\oV(2n+1)$ pour tout $n$.
\end{proof}

Décrivons l'opérade $\oW$ duale de $\oV$ (voir \cite{markl_remm} pour
la procédure de calcul de l'opérade duale). Elle est engendrée par un
élément $w$ de poids $0$ et de degré $3$ vérifiant les relations
suivantes :
\begin{equation}
  w \circ_1 w + w\circ_2 w =0 \quad\text{ et }\quad w \circ_2 w + w\circ_3 w =0. 
\end{equation}

Par dualité de Koszul, comme $\oV$ est de Koszul, $\oW$ l'est
aussi. On a la description suivante :
\begin{proposition}
  L'opérade $\oW$ est de dimension $1$ en chaque degré impair et une
  base de $\oW(2n+1)$ est donnée par $w_n=w \circ_1 \dots \circ_1 w$ avec
  $n$ copies de $w$.
\end{proposition}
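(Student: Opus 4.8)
Le plan est de reprendre pour $\oW$ la méthode des bases de Gröbner déjà employée pour $\oV$. Comme le générateur $w$ est de poids $0$, tout le groupe $\oW(2n+1)$ est concentré en poids $0$, de sorte qu'aucun signe de Koszul n'intervient. Je commencerais par fixer un ordre admissible sur les monômes en arbre de l'opérade libre sur $w$ dans lequel les compositions $\circ_1$ et $\circ_3$ dominent $\circ_2$ — c'est l'ordre opposé de celui utilisé pour $\oV$. Pour un tel ordre, les termes dominants des deux relations définissant $\oW$ sont respectivement $w\circ_1 w$ et $w\circ_3 w$, et ces relations se lisent comme les règles de réécriture
\begin{equation*}
  w\circ_1 w \longrightarrow -\,w\circ_2 w \qquad\text{et}\qquad w\circ_3 w \longrightarrow -\,w\circ_2 w.
\end{equation*}

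Pour la majoration $\dim\oW(2n+1)\leq 1$, j'invoquerais le fait général que les monômes réduits — ceux qui ne sont divisibles par aucun terme dominant — engendrent toujours l'opérade quotient. Ici un monôme en arbre est réduit si et seulement si aucun sommet interne n'est greffé sur la première ou la troisième entrée de son père, c'est-à-dire si toutes les greffes portent sur l'entrée médiane. Il existe donc exactement un monôme réduit de poids $n$ pour chaque $n\geq 1$, à savoir le ``peigne médian'' $w\circ_2(w\circ_2(\cdots\circ_2 w))$, auquel s'ajoute l'unité $\un$ en poids $0$ ; en degré pair, il n'y a aucun monôme, une composée d'opérations d'arité $3$ étant toujours d'arité impaire. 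D'où la majoration.

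Pour la minoration et l'identification de la base, je partirais de l'observation que $w_n=w\circ_1\cdots\circ_1 w$ est un \emph{unique} monôme en arbre (le ``peigne gauche''), bien défini sans parenthèses par l'axiome \eqref{axiome_a} des opérades. Les deux règles de réécriture ci-dessus envoient chacune un monôme sur $\pm$ un monôme ; en les appliquant à $w_n$ on aboutit donc, en un nombre fini d'étapes puisque l'ordre est admissible, à $\pm$ l'unique monôme réduit de poids $n$. Ce dernier est donc non nul, ce qui donne $\dim\oW(2n+1)\geq 1$, puis $\dim\oW(2n+1)=1$, et $w_n$ est alors, au signe près, égal à ce monôme réduit : c'est bien une base de $\oW(2n+1)$. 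Le cas $n=0$ est celui de l'unité.

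La difficulté est plus technique que conceptuelle : il s'agit de vérifier soigneusement que l'ordre admissible peut être choisi de manière à produire exactement $w\circ_1 w$ et $w\circ_3 w$ comme termes dominants ; c'est aussi ce point qui assure qu'aucune compensation ne peut survenir lors de la réécriture de $w_n$, et donc la minoration gratuite de la dimension. On pourra croiser le résultat avec la dualité de Koszul : la série de Poincaré de $\oW$ est, au signe près, l'inverse pour la composition de celle de $\oV$, laquelle est la série des nombres de Catalan puisque $(Q_x)_{x\in\Y_n}$ est une base de $\oV(2n+1)$ ; cet inverse valant $\sum_{n\geq 0}\pm\, t^{2n+1}$, on retrouve $\dim\oW(2n+1)=1$ pour tout $n$.
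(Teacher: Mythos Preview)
The paper gives no explicit proof; the sentence immediately preceding the proposition (``Par dualité de Koszul, comme $\oV$ est de Koszul, $\oW$ l'est aussi'') signals that the dimension count is meant to follow from Koszulity: the generating series of $\oW$ is the compositional inverse (with the sign coming from the weight grading of $\oV$) of the Catalan series, namely $\sum_{n\geq 0}(\pm)t^{2n+1}$. This is exactly what you present in your last paragraph as a mere ``cross-check''.

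Your Gr\"obner approach is fine for the \emph{upper} bound: regardless of confluence, reduced monomials span the quotient, and here the only reduced monomial in arity $2n+1$ is the middle comb $w\circ_2(w\circ_2(\cdots))$. The gap is in your lower bound. Rewriting $w_n$ step by step to $\pm$(middle comb) only establishes that these two elements are \emph{equal} in $\oW(2n+1)$; it says nothing about their common value being nonzero. To conclude that the reduced monomial is nonzero you need the reduced monomials to be linearly independent, i.e.\ confluence of the rewriting system --- and you neither check it nor claim to. (It does hold, but with leading terms $w\circ_1 w$ and $w\circ_3 w$ there are several critical pairs to resolve: the two self-overlaps, and the mixed overlaps such as $w\circ_1(w\circ_3 w)$ and $w\circ_3(w\circ_1 w)$; this is routine but not free.) As written, nothing in your argument rules out $\oW(2n+1)=0$.

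Your Koszul-duality paragraph is therefore not a sanity check but the missing ingredient, and it is also the route the paper takes. Once $\dim\oW(2n+1)=1$ is obtained that way, your rewriting observation does finish the job: $w_n=\pm$(unique reduced monomial), and the latter now spans a one-dimensional space, so $w_n$ is a basis. Either promote that paragraph to the heart of the proof, or insert the confluence verification to make the Gr\"obner argument self-contained.
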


\begin{proposition}
\label{cycliqueW}
  On peut munir $\oW$ d'une structure cyclique définie par
  $\gamma(w)=-w$. On a alors $\gamma(w_n)=(-1)^n w_n$.
\end{proposition}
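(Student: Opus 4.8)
The plan is to treat the two assertions of Proposition \ref{cycliqueW} separately. For the existence of the cyclic structure one argues exactly as in the proof of the anticyclic structure on $\oV$: by the general theory of cyclic operads defined by a quadratic presentation, it suffices to check that the cyclic operator $\gamma$ determined on the generator by $\gamma(w)=-w$ sends each of the two defining relations of $\oW$ to a linear combination of these relations in the arity-$5$ component of the free operad on $w$. Since $w$ has weight $0$, every Koszul sign involving $w$ is trivial, so the computation is short: using the cyclic analogues of \eqref{cyclique1} and \eqref{cyclique2} (with the signs proper to the cyclic case), one gets
\[
  \gamma(w\circ_1 w + w\circ_2 w) = w\circ_3 w - w\circ_1 w = (w\circ_2 w + w\circ_3 w) - (w\circ_1 w + w\circ_2 w)
\]
and
\[
  \gamma(w\circ_2 w + w\circ_3 w) = -w\circ_1 w - w\circ_2 w = -(w\circ_1 w + w\circ_2 w),
\]
both of which lie in the span of the relations. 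Hence $\gamma$ descends to $\oW$ and defines the asserted cyclic structure, unique with $\gamma(w)=-w$.

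For the formula $\gamma(w_n)=(-1)^n w_n$ I would argue by induction on $n$. The cases $n=0$ (where $w_0=\un$ and $\gamma(\un)=\un$) and $n=1$ (where $\gamma(w)=-w$) are immediate. For $n\geq 2$ one writes $w_n = w\circ_1 w_{n-1}$ and applies the cyclic axiom corresponding to \eqref{cyclique2} together with the induction hypothesis $\gamma(w_{n-1})=(-1)^{n-1}w_{n-1}$ and $\gamma(w)=-w$ (all signs again trivial), obtaining
\[
  \gamma(w_n) = (-1)^n\, w_{n-1}\circ_{\max} w .
\]
It then remains to establish the identity $w_{n-1}\circ_{\max} w = w_n$ in $\oW$. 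For this I would write $w_{n-1}=w\circ_1 w_{n-2}$ (with $w_0=\un$ for $n=2$), use the operad axiom \eqref{axiome_c} to rewrite $(w\circ_1 w_{n-2})\circ_{\max} w$ as $(w\circ_3 w)\circ_1 w_{n-2}$, replace $w\circ_3 w$ by $w\circ_1 w$ using the defining relations of $\oW$, and finally invoke the associativity of $\circ_1$ (axiom \eqref{axiome_a}) to recognize $(w\circ_1 w)\circ_1 w_{n-2} = w\circ_1 w_{n-1} = w_n$.

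The only slightly delicate point is this last identity $w_{n-1}\circ_{\max} w = w_n$: since $\oW(2n+1)$ is one-dimensional, its two sides differ a priori by a scalar, and it is precisely the manipulation above (axiom \eqref{axiome_c} followed by the relations) that pins this scalar down to $1$. Everything else reduces to routine sign bookkeeping, which is trivial here because $w$ has weight zero.
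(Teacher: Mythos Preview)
Your proof is correct and follows essentially the same strategy as the paper's: check that $\gamma$ preserves the ideal of relations for existence, then induct on $n$ for the formula, reducing to an identity of the form (your $w_{n-1}\circ_{\max} w = w_n$; the paper's $w\circ_3 w_{n-1}=w_n$) proved via the relation $w\circ_1 w = w\circ_3 w$. The only difference is cosmetic --- you write $w_n = w\circ_1 w_{n-1}$ while the paper writes $w_n = w_{n-1}\circ_1 w$ --- and your explicit verification of the compatibility with both relations is a welcome fleshing-out of what the paper leaves implicit.
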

\begin{proof}
  Pour l'existence, il suffit de vérifier la compatibilité de $\gamma$
  avec les relations de l'opérade $\oW$. Pour la valeur de $\gamma$
  sur $w_n$, il suffit de faire une récurrence sur $n$. En effet, on a
  \begin{equation*}
    \gamma(w_n)=\gamma(w_{n-1} \circ_1 w)=- w \circ_3 \gamma(w_{n-1})=(-1)^n w \circ_3 w_{n-1}.
  \end{equation*}
  On montre par ailleurs par récurrence sur $n$ que $ w \circ_3
  w_{n-1}= w_n$, en utilisant la relation $w \circ_1 w = w\circ_3 w$.
\end{proof}

\begin{remark}
  \label{crux}
  On peut vérifier que cette structure cyclique sur $\oW$ est celle
  qui provient, via le quasi-isomorphisme entre $\cobar\oV$ et $\oW$,
  de la structure cyclique naturelle sur la cobar-construction
  $\cobar\oV$.
\end{remark}

\subsection{Rappels sur les fonctions symétriques}

On utilise les notations standards pour les fonctions symétriques, qui
sont celles du livre \cite{macdonald}.

Soit $\Lambda$ l'anneau des fonctions symétriques sur $\QQ$. On note
$(p_n)_{n \geq 1}$ les fonctions symétriques ``sommes de puissances''. L'anneau
$\Lambda$ est l'anneau des polynômes en les $(p_n)_{n \geq 1}$. Il
admet une base $(p_\lambda)_\lambda$, formée des monômes en les $(p_n)_{n \geq 1}$
et indexée par les partitions d'entiers. On munit $\Lambda$ d'une graduation
naturelle en posant $\deg(p_i)=i$. On travaille par la suite dans le complété de l'anneau $\Lambda$ par
rapport à sa graduation. 

On note $\Sigma$ la suspension des fonctions symétriques, définie par
\begin{equation}
  (\Sigma f)(p_1,p_2,\dots,p_i,\dots)=-f(-p_1,-p_2,\dots,-p_i,\dots).
\end{equation}
On note $\omega$ l'automorphisme involutif des fonctions symétriques
qui est défini par
\begin{equation}
  \label{def_omega}
  (\omega f)(p_1,p_2,\dots,p_i,\dots)=f(-p_1,p_2,\dots,(-1)^{i-1} p_i,\dots).
\end{equation}

Si $\rho$ est un caractère du groupe symétrique $\sym_n$, on
identifiera $\rho$ à la fonction symétrique
\begin{equation}
  \sum_{\lambda \vdash n} \rho(C_\lambda) \frac{p_\lambda}{z_\lambda},
\end{equation}
où la somme porte sur les partitions de $n$, $C_\lambda$ est la classe
de conjugaison de type cyclique $\lambda$ et $z_\lambda |C_\lambda| = n!$.

On note $\circ$ le pléthysme des fonctions symétriques, pour lequel on
renvoie à la littérature.

On appelle \textbf{série caractéristique} d'une opérade cyclique $\oP$
la fonction symétrique 
\begin{equation}
  \cha_{\oP} = \sum_{n \geq 1} \Ind_{\ZZ_{/n+1}}^{\sym_{n+1}} \left(\sum_{k \in \ZZ} (-1)^k \chi_{\oP(n)_k}\right),
\end{equation}
où $\chi_{\oP(n)_k}$ est le caractère de $\ZZ_{/n+1}$ sur $\oP(n)_k$,
la composante de poids $k$ de $\oP(n)$. 


\smallskip

On rappelle maintenant la transformation de Legendre des fonctions
symétriques, introduite par Getzler et Kapranov dans \cite[\S
7]{getzler-kapranov_modular}.

Soient $A$ et $B$ deux fonctions symétriques sans termes de degré
inférieur ou égal à $1$ et tels que les termes de degré $1$ de
$\partial_{p_1} A$ et de $\partial_{p_1} B$ sont non nuls. On dit que
$A$ est la transformée de Legendre de $B$ si
\begin{equation}
  A \circ \partial_{p_1} B + B = p_1 \partial_{p_1} B.
\end{equation}
La transformation de Legendre est une involution. Dans cette
situation, on a aussi une relation entre les dérivées partielles de
$A$ et de $B$ par rapport à $p_1$ :
\begin{equation}
  \partial_{p_1} A \circ \partial_{p_1} B = p_1.
\end{equation}

La dualité de Koszul est reliée comme suit à la transformée de Legendre, voir \cite[\S 7]{getzler-kapranov_modular} et en particulier le corollaire (7.22).

\begin{proposition}
  \label{koszul_legendre}
  Lorsque deux opérades cycliques $\oV$ et $\oW$ sont de Koszul et
  duales l'une de l'autre (au sens cyclique) alors la série
  caractéristique $\cha_{\oV}$ est la transformée de Legendre de
  $-\Sigma \cha_{\oW}$.
\end{proposition}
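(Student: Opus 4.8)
Proposition \ref{koszul_legendre} asserts a statement that is essentially cited from Getzler–Kapranov. Let me sketch how one would prove it, given what's in the paper.

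L'idée est d'obtenir ce résultat à partir du calcul de la caractéristique d'Euler de la cobar-construction, en suivant l'argument de Getzler et Kapranov \cite[\S 7]{getzler-kapranov_modular}. Comme $\oV$ est de Koszul, il existe un quasi-isomorphisme d'objets cycliques $\cobar \oV \to \oW$ ; c'est le contenu de la remarque \ref{crux}. La série $\cha$ est additive sur les suites exactes courtes de $\sym$-modules (cycliques), donc c'est un invariant de caractéristique d'Euler ; en passant aux caractéristiques d'Euler dans chaque arité, on obtient $\cha_\oW = \cha_{\cobar\oV}$.

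On exprime ensuite $\cha_{\cobar\oV}$ en fonction de $\cha_\oV$. En arité $n$, la cobar-construction se décompose en une somme sur les classes d'isomorphisme d'arbres à $n+1$ pattes dont les sommets internes sont décorés par $\oV$, chaque arête interne portant une suspension homologique avec son signe. Au niveau des fonctions symétriques, la somme sur les arbres enracinés décorés est régie par la composition plethystique avec la série de Frobenius, et le décompte cyclique (sans racine) qui intervient dans $\cha$ en est la ``primitive'' ; une fois les suspensions des arêtes prises en compte par l'opérateur $\Sigma$, Getzler et Kapranov montrent, dans leur corollaire (7.22), que ceci est exactement la relation définissant la transformée de Legendre. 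Concrètement, ceci donne que $-\Sigma\cha_{\cobar\oV}$ est la transformée de Legendre de $\cha_\oV$. En combinant avec le paragraphe précédent, $-\Sigma \cha_\oW$ est la transformée de Legendre de $\cha_\oV$ ; comme la transformée de Legendre est une involution, $\cha_\oV$ est à son tour la transformée de Legendre de $-\Sigma\cha_\oW$, ce qui est l'assertion voulue. Il faut également vérifier que les hypothèses de la transformée de Legendre sont satisfaites : $\cha_\oV$ et $\cha_\oW$ n'ont pas de termes de degré $\leq 1$ car $\oV(1)=\oW(1)=\mathsf{k}\un$ contribue en arité totale $2$, et les termes linéaires de $\partial_{p_1}\cha_\oV$ et de $\partial_{p_1}\cha_\oW$ sont non nuls pour la même raison.

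Le principal obstacle est la gestion des signes et des graduations : il faut faire correspondre la graduation par le poids munie de la règle des signes de Koszul utilisée ici avec la graduation homologique de \cite{getzler-kapranov_modular}, confirmer que la structure cyclique sur $\cobar\oV$ employée ci-dessus est bien celle pour laquelle leur formule arborescente est valable (d'où le recours à la remarque \ref{crux}), et suivre l'interaction des trois signes en jeu --- le $(-1)^k$ dans la définition de $\cha$, le signe global, et le signe caché dans $\Sigma$ --- afin que l'identité finale apparaisse avec exactement la normalisation annoncée, et non une variante tordue par $\Sigma$ ou par $\omega$.
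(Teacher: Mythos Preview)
Your proposal is correct and matches the paper's treatment: the paper does not prove this proposition at all but simply cites it from Getzler--Kapranov \cite[\S 7]{getzler-kapranov_modular}, in particular their corollary~(7.22), adding only the clarification that ``au sens cyclique'' means the cyclic structure on $\oW$ is the one inherited via the quasi-isomorphism $\cobar\oV\to\oW$, which yields $\cha_{\oW}=\cha_{\cobar\oV}$. Your sketch of the underlying argument (Euler characteristic invariance under quasi-isomorphism, tree decomposition of the cobar-construction giving the Legendre relation, involutivity) is an accurate summary of the Getzler--Kapranov proof and goes beyond what the paper itself supplies.
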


Dans cet énoncé, ``au sens cyclique'' signifie que la structure
cyclique de $\oW$ provient de celle de $\oV$ via la cobar-construction
$\cobar\oV$ vue comme opérade cyclique et le
quasi-isomorphisme entre $\cobar\oV$ et $\oW$. Dans ce cas, on a une égalité
\begin{equation}
  \cha_{\oW} = \cha_{\cobar\oV}.
\end{equation}

Pour plus de détails sur le cadre théorique de la proposition
\ref{koszul_legendre}, le lecteur pourra consulter
\cite{getzler-kapranov_modular,getzler-kapranov_cyclic,getzler_m0n}.

\subsection{Séries caractéristiques de $\oV$ et $\oW$}

\begin{proposition}
  \label{formuleW}
  La série caractéristique de l'opérade cyclique $\oW$ est
  \begin{equation}
    \cha_{\oW} = \sum_{n \geq 1} \frac{1}{2n} \sum_{j | 2n} (-1)^{j(n-1)} \phi(2n/j) p_{2n/j}^j,
  \end{equation}
  où $\phi$ est l'indicatrice d'Euler.
\end{proposition}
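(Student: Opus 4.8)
The plan is to compute the character $\chi_{\oW(2n)_0}$ of the cyclic group $\ZZ_{/2n+1}$ acting on the one-dimensional space $\oW(2n+1)_0 = \mathsf{k}\, w_n$, then apply the definition of the characteristic series together with the induction formula for symmetric functions. By Proposition~\ref{cycliqueW} the generator $\gamma$ of the cyclic structure satisfies $\gamma(w_n) = (-1)^n w_n$. The subtlety is that the cyclic structure on $\oW(2n+1)$ is encoded by an operator of order $2n+2$; however, the relevant group acting in the definition of $\cha_{\oW}$ (written as a sum over $n$ of $\Ind_{\ZZ_{/n+1}}^{\sym_{n+1}}$) is $\ZZ_{/2n+2}$ acting on $\oW(2n+1)$, so I should reindex carefully: the term indexed by ``$n$'' in the abstract formula for $\cha$, specialized to $\oW$ which is concentrated in odd degrees, contributes only when that index equals $2n$, giving $\Ind_{\ZZ_{/2n+1}}^{\sym_{2n+1}}$ of a character of $\ZZ_{/2n+1}$ on $\oW(2n)$. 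Wait --- more precisely, I must match the grading conventions: $\oW(m)$ is nonzero only for odd $m = 2n+1$, and for such $m$ the cyclic group in play is $\ZZ_{/m+1} = \ZZ_{/2n+2}$, which acts through the operator of Proposition~\ref{cycliqueW}, whose generator sends $w_n \mapsto (-1)^n w_n$ and which has order $2n+2$. The careful bookkeeping of ``which cyclic group acts with which generator'' is the first thing to pin down.

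Next I would compute the character of $\ZZ_{/2n+2}$ on the line $\mathsf{k} w_n$. Since a generator acts by the scalar $(-1)^n$, the element of $\ZZ_{/2n+2}$ of order dividing $d$ acts by $(-1)^{n \cdot (2n+2)/d}$ on suitable powers; concretely, the generator $g$ acts by $(-1)^n$, so $g^k$ acts by $(-1)^{nk}$. The induced class function $\Ind_{\ZZ_{/2n+2}}^{\sym_{2n+2}}$ of this one-dimensional character is, by the standard formula for inducing from a cyclic subgroup generated by a long cycle, supported on conjugacy classes whose cycle type is a ``rectangular'' partition $(2n+2/j)^j$ for $j \mid 2n+2$, with the value on such a class being a sum over primitive roots of unity of order $j$; this is exactly how one gets $\phi(2n+2/j)$ and the power-sum $p_{2n+2/j}^{j}$. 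Translating through the dictionary between class functions on $\sym_{m}$ and symmetric functions of degree $m$ (the formula $\sum_\lambda \rho(C_\lambda) p_\lambda / z_\lambda$ recalled in the excerpt), the induced character becomes $\frac{1}{m}\sum_{j \mid m} (\text{eigenvalue contribution}) \, \phi(m/j)\, p_{m/j}^{j}$ with $m = 2n+2$ --- but the stated formula has $\frac{1}{2n}$ and $p_{2n/j}^j$, i.e.\ $m=2n$. So I should recheck the degree shift: the characteristic series assigns to the cyclic-operad component $\oP(n)$ a symmetric function of degree $n+1$ (the induction target is $\sym_{n+1}$), and $\oW(2n-1)$ has degree $2n-1$, inducing to $\sym_{2n}$. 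Hence the summand of degree $2n$ comes from $\oW(2n-1) = \mathsf{k}\, w_{n-1}$, on which the generator of $\ZZ_{/2n}$ acts by $(-1)^{n-1}$; then $g^j$ acts by $(-1)^{j(n-1)}$, producing exactly the sign $(-1)^{j(n-1)}$ in the claimed formula.

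So the key steps, in order, are: (i) fix conventions so that the degree-$2n$ part of $\cha_{\oW}$ is $\Ind_{\ZZ_{/2n}}^{\sym_{2n}}$ applied to the character of $\ZZ_{/2n}$ on the line $\oW(2n-1)_0 = \mathsf{k}\, w_{n-1}$, recording that a generator acts by $(-1)^{n-1}$ (this uses Proposition~\ref{cycliqueW} and the ``opposite sign'' remark that $\oW$ is concentrated in weight $0$ in odd degrees, so the sign $(-1)^k$ in the definition of $\cha$ contributes trivially); (ii) invoke the classical formula for the characteristic of a character of $\ZZ_{/N}$ induced up to $\sym_N$: the generator $g$ has image with cycle type $(1)^N$ pulled apart --- rather, $g^j$ has cycle type $(N/\gcd(N,j))^{\gcd(N,j)}$, and summing $\frac{1}{N}\sum_{j=0}^{N-1} \lambda(g^j)\, p_{\text{type}(g^j)}$ and grouping $j$'s by $d = \gcd(N,j)$ gives $\frac{1}{N}\sum_{d \mid N} \big(\sum_{\gcd(j,N)=d} \lambda(g)^{j}\big) p_{N/d}^{d}$, where the inner Ramanujan-type sum over $j$ with $\gcd(j,N)=d$ of $(-1)^{(n-1)j}$ evaluates to $\phi(N/d)$ times the appropriate sign because $(-1)^{n-1}$ is a value at a specific point; (iii) carry out this inner sum to land on $(-1)^{j(n-1)}\phi(2n/j)$ after renaming the summation index. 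The main obstacle I anticipate is step (iii) together with the sign/convention bookkeeping in step (i): one must verify that the scalar by which $g^{k}$ acts is genuinely $(-1)^{k(n-1)}$ and not, say, a primitive $(2n)$-th root of unity --- i.e.\ that the cyclic operator on the one-dimensional space $\oW(2n-1)$, although of order $2n$ as an abstract operator (it must satisfy $\gamma^{m+1} = \id$ on $\oW(m)$ in the cyclic convention, here $\gamma^{2n} = \id$), acts by a $\pm 1$ scalar, which forces the induced character to be supported on the $p_{2n/j}^j$ with the stated Euler-$\phi$ multiplicities. Once that is confirmed, the remainder is the routine Ramanujan-sum computation.
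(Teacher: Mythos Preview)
Your proposal is correct and follows essentially the same route as the paper: identify the degree-$2n$ piece of $\cha_{\oW}$ as $\Ind_{\ZZ_{/2n}}^{\sym_{2n}}$ of the one-dimensional character where the generator acts by $(-1)^{n-1}$ (using Proposition~\ref{cycliqueW}), expand the induced character as $\frac{1}{2n}\sum_{j\mid 2n}\big(\sum_{i}\,(-1)^{ij(n-1)}\big)\,p_{2n/j}^{j}$, and then evaluate the inner Ramanujan-type sum to $(-1)^{j(n-1)}\phi(2n/j)$. The paper's proof is exactly this, stated more tersely; your worry about the scalar being a genuine $\pm 1$ rather than a primitive root is already settled by Proposition~\ref{cycliqueW}.
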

\begin{proof}
  Par la définition de $\cha_{\oW}$ et par la proposition \ref{cycliqueW}, on a
  \begin{equation*}
    \cha_{\oW}=\sum_{n \geq 1} \Ind_{\ZZ_{/2n}}^{\sym_{2n}} (-1)^{n-1},
  \end{equation*}
  où $(-1)^{n-1}$ est la caractère valant $(-1)^{n-1}$ sur le
  générateur de $\ZZ_{/2n}$. Par un calcul standard du caractère
  induit, on trouve que
  \begin{equation*}
    \cha_{\oW} = \sum_{n \geq 1} \frac{1}{2n} \sum_{j | 2n} \sum\limits_{\substack{i=1\\i \wedge 2n/j=1}}^{2n/j} (-1)^{ij(n-1)} p_{2n/j}^j.
  \end{equation*}


  Il suffit enfin de remarquer (en distinguant le cas où $j$ est
  impair et $n$ pair) que
  \begin{equation*}
    \sum\limits_{\substack{i=1\\i \wedge 2n/j=1}}^{2n/j} (-1)^{ij(n-1)} = (-1)^{j(n-1)} \phi(2n/j).
  \end{equation*}
\end{proof}

Par la remarque \ref{crux}, la proposition \ref{koszul_legendre} et la
proposition \ref{transfo}, on a donc
\begin{proposition}
  \label{formuleV}
  La série caractéristique de l'opérade cyclique $\oV$ est
  \begin{equation}
    \cha_{\oV} = \sum_{n \geq 1} (-1)^{n-1} c_{n-1} p_1^{2n} + \sum_{n \geq 1} \frac{1}{2n} \sum_{ j| 2n} \lambda({2n/j}) \phi(j) (-1)^{2n(n-1)/j} p_j^{2n/j},
  \end{equation}
où
\begin{equation}
\label{def_lambda} \lambda(n)=(-1)^{\binom{n}{2}} \binom{n-1}{\lfloor \frac{n-1}{2} \rfloor} \quad \text{ et }\quad  c_n=\frac{1}{n+1}\binom{2n}{n}.
\end{equation}
\end{proposition}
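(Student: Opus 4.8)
The plan is to combine the Koszul-duality input with an explicit evaluation of the Legendre transform. By Remark~\ref{crux} the cyclic structures on $\oV$ and $\oW$ are Koszul-dual in the sense required by Proposition~\ref{koszul_legendre}, so $\cha_{\oV}$ is the Legendre transform of $B := -\Sigma \cha_{\oW}$. The first step is to write $B$ down: since $\Sigma$ sends a monomial $p_{2n/j}^{j}$ (which has $j$ parts) to $(-1)^{j+1} p_{2n/j}^{j}$, Proposition~\ref{formuleW} gives, after collecting signs,
\begin{equation*}
  B = -\Sigma \cha_{\oW} = \sum_{n \geq 1} \frac{1}{2n} \sum_{j | 2n} (-1)^{jn} \phi(2n/j)\, p_{2n/j}^{j}.
\end{equation*}
Both $B$ and $\cha_{\oV}$ have no term of degree $\leq 1$, and the part of $B$ involving only $p_1$ is $\sum_{n \geq 1} \frac{1}{2n} p_1^{2n}$, so $\partial_{p_1} B = \sum_{n \geq 1} p_1^{2n-1} = p_1/(1-p_1^2)$, whose degree-$1$ term is $p_1 \neq 0$; the Legendre transform is thus defined.

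Next I would invert $\partial_{p_1} B$. Let $\Psi$ be the compositional inverse of $t \mapsto t/(1-t^2)$; by the stated relation $\partial_{p_1}\cha_{\oV} \circ \partial_{p_1} B = p_1$ between partial derivatives, $\partial_{p_1}\cha_{\oV} = \Psi$. Solving $p_1 \Psi^2 + \Psi - p_1 = 0$ gives $\Psi = \sum_{m \geq 0} (-1)^m c_m\, p_1^{2m+1} = p_1\, v(p_1^2)$, where $v(x) = \sum_{m \geq 0} (-1)^m c_m x^m$ satisfies the Catalan relation $v = 1 - x v^2$ (equivalently $1/v = 1 + x v$, and $x v^2 = 1 - v$). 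These identities drive everything. Applying $(\,\cdot\,) \circ \Psi$ to the Legendre relation $\cha_{\oV} \circ \partial_{p_1} B + B = p_1\,\partial_{p_1} B$ and using $\partial_{p_1} B \circ \Psi = p_1$ yields
\begin{equation*}
  \cha_{\oV} = (p_1\,\partial_{p_1} B) \circ \Psi - B \circ \Psi .
\end{equation*}

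For the first term, $p_1\,\partial_{p_1} B = p_1^2/(1-p_1^2) = \sum_{n \geq 1} p_1^{2n}$, so $(p_1\,\partial_{p_1} B) \circ \Psi = \sum_{n \geq 1} \Psi^{2n} = \Psi^2/(1-\Psi^2)$; since $\Psi^2 = p_1^2 v(p_1^2)^2 = 1 - v(p_1^2)$ by the Catalan relation, this collapses to $p_1^2 v(p_1^2) = \sum_{n \geq 1} (-1)^{n-1} c_{n-1} p_1^{2n}$, which is exactly the first sum of the claimed formula. For the second term, re-index $B$ by $i = 2n/j$ to get $B = \sum_n \frac{1}{2n} \sum_{i | 2n} (-1)^{2n^2/i} \phi(i)\, p_i^{2n/i}$, and substitute $p_i \circ \Psi = p_i v(p_i^2)$, so that
\begin{equation*}
  -\,B \circ \Psi = -\sum_{n \geq 1} \frac{1}{2n} \sum_{i | 2n} (-1)^{2n^2/i} \phi(i)\, \bigl(p_i\, v(p_i^2)\bigr)^{2n/i}.
\end{equation*}
Expanding $v(x)^k$ by the ballot-number identity $[x^r] v(x)^k = (-1)^r \tfrac{k}{2r+k}\binom{2r+k}{r}$ and reorganizing the resulting sum over divisors --- the shape $\tfrac{1}{m}\sum_{d | m} \phi(m/d)(\cdots)$ is stable under these operations --- one must check that this equals $\sum_{n \geq 1} \frac{1}{2n} \sum_{j | 2n} \lambda(2n/j)\phi(j)(-1)^{2n(n-1)/j} p_j^{2n/j}$, with $\lambda(n) = (-1)^{\binom n2}\binom{n-1}{\lfloor (n-1)/2\rfloor}$ emerging as the central binomial coefficient surviving the divisor-sum cancellation.

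I expect the last identification of $-\,B \circ \Psi$ to be the only real obstacle: it is a genuine Catalan/ballot identity buried inside a sum over divisors of $2n$, and reconciling the signs $(-1)^{2n^2/i}$ and $(-1)^{2n(n-1)/j}$ while passing from $\tfrac{k}{2r+k}\binom{2r+k}{r}$ to $\lambda(\,\cdot\,)$ is where the combinatorics actually lives; the reduction to a Legendre transform and the first-term computation are formal by comparison. If Proposition~\ref{transfo} is precisely the assertion that the Legendre transform of a series of the shape of $B$ is the $\lambda$-sum above, then this last step is exactly its content, and the present proof is just the assembly of \ref{crux}, \ref{koszul_legendre}, \ref{formuleW} and \ref{transfo}.
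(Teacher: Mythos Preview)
Your last paragraph is exactly the paper's proof of Proposition~\ref{formuleV}: it is a one-line assembly of Remark~\ref{crux}, Proposition~\ref{koszul_legendre}, Proposition~\ref{formuleW} and Proposition~\ref{transfo}. Everything before that paragraph is not needed for \ref{formuleV} itself; it is a sketch of an alternative proof of Proposition~\ref{transfo}.

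On that sketch: the paper proves \ref{transfo} by \emph{verifying} the Legendre relation $A\circ\partial_{p_1}B + B = p_1\,\partial_{p_1}B$ for the explicit candidate $A$, reducing term-by-term to a pair of one-variable Taylor identities (Lemma~\ref{series_formelles}). You instead \emph{solve} for $A$ by composing with the inverse $\Psi=\partial_{p_1}A$, writing $A = (p_1\,\partial_{p_1}B)\circ\Psi - B\circ\Psi$, and then evaluating the two pieces; your first-piece computation via the Catalan relation $v=1-xv^{2}$ is clean and correct. Your approach is more constructive (it would discover $A$ rather than check it) but lands on the same hard kernel: identifying $-\,B\circ\Psi$ with the $\lambda$-sum is equivalent, after swapping the summations over $j$ and $n$, to the same logarithmic/ballot identities the paper packages as Lemma~\ref{series_formelles}. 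So the ``real obstacle'' you flag is genuine, and it is precisely what \ref{transfo}/\ref{series_formelles} supply; you have not bypassed it, only reorganised the computation around it.
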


\subsection{Polynôme caractéristique de $\thetav$}

On introduit la suite $(b_n)_{n \geq 1}$ définie pour tout $n\geq 1$ par
\begin{equation}
  \label{defib}
  b_n =\frac{1}{n} \sum_{d |n} \mu(d) \lambda(n/d),
\end{equation}
où $\mu$ est la fonction de Möbius et $\lambda$ est définie dans \eqref{def_lambda}. On peut montrer que les $b_n$ sont des entiers relatifs, voir \cite[\S 3]{chapoton_AIF}. Par inversion de Möbius, on a
\begin{equation}
  \label{sommeb}
  \sum_{d |n} d b_d = \lambda(n).
\end{equation}

Soit $n$ un entier et $d$ divisant $n$. On note $M_{n,d}$ le module
$\QQ[t]/(t^d-1)$ sur lequel le générateur de $\ZZ_{/n}$ agit par
multiplication par $t$. Soit $M'_{n,d}$ le module induit de $\ZZ_{/n}$
à $\sym_n$ de $M_{n,d}$. Le caractère de $M'_{n,d}$ est donné (voir \cite[\S 1.4]{chapoton_AIF}) par la
formule :
\begin{equation}
  \label{formuleM}
  \frac{d}{n}\sum_{\ell | n/d} \phi(\ell) p_{\ell}^{n/\ell}.
\end{equation}
En particulier, le caractère de $M'_{n,n}$ est $p_1^n$.

La proposition \ref{formuleV} se traduit en l'assertion suivante.
\begin{proposition}
  \label{somme_de_M}
  L'action du groupe cyclique $\ZZ_{/2n+2}$ engendré par $(-1)^{n+1} \thetav$ sur l'espace vectoriel $\oV(2n+1)$ est donnée par le module virtuel
  \begin{equation}
    c_n M_{2n+2,2n+2} - (-1)^{n+1} \sum_{d | 2n+2} b_d M_{2n+2,d}.
  \end{equation}
\end{proposition}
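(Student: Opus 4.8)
Posons $N=2n+2$. L'énoncé n'est qu'une traduction de la proposition \ref{formuleV} dans le langage des modules $M_{N,d}$, le seul point vraiment délicat étant le décalage de signe entre l'opérateur cyclique $\gammav$ qui sert à définir $\cha_{\oV}$ et le générateur $g=(-1)^{n+1}\thetav$ du groupe $\ZZ_{/N}$ figurant dans l'énoncé.

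Je commencerais par noter que $\oV(2n+1)$ est concentré en poids $n$ (les $Q_x$ sont dans $\oV(2n+1)_n$), de sorte que le caractère virtuel intervenant dans $\cha_{\oV}$ pour l'arité $2n+1$ vaut $(-1)^n\chi_n$, où $\chi_n$ est le caractère (honnête) de $\ZZ_{/N}$ agissant sur $\oV(2n+1)$ via $\gammav$ ; autrement dit $\cha_{\oV}=\sum_{n\geq 0}(-1)^n\Ind_{\ZZ_{/N}}^{\sym_N}\chi_n$. Comme $\gammav=-\thetav$, sur $\oV(2n+1)$ le générateur $g$ et l'opérateur $\gammav$ ne diffèrent que du scalaire $(-1)^n$ ; le $\gammav$-module est donc le « twist » du $g$-module de l'énoncé par $(-1)^n$.

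Ensuite l'ingrédient structurel. Comme $\gammav$ (donc $g$) est d'ordre fini, le $\ZZ_{/N}$-module sous-jacent à $\oV(2n+1)$ est, à isomorphisme près, de la forme $\bigoplus_{e\mid N}(\QQ[t]/\Phi_e(t))^{a_e}$ (son polynôme caractéristique est un produit de polynômes cyclotomiques) ; il est en particulier $\operatorname{Aut}(\ZZ_{/N})$-invariant, tout comme les $M_{N,d}$. Or tout tel module s'écrit de manière unique comme combinaison virtuelle entière des $(M_{N,d})_{d\mid N}$ — car $M_{N,d}\simeq\bigoplus_{e\mid d}\QQ[t]/\Phi_e(t)$ et la matrice de passage à la base $(\QQ[t]/\Phi_e(t))_{e\mid N}$ est unitriangulaire pour la divisibilité — et $\Ind_{\ZZ_{/N}}^{\sym_N}$ est injective sur ce sous-réseau (fait standard, cf. \cite[\S 1.4]{chapoton_AIF}). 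L'assertion équivaut donc à l'égalité des composantes de degré $N$ de deux identités de fonctions symétriques, obtenues en appliquant $(-1)^n\Ind_{\ZZ_{/N}}^{\sym_N}$ à chaque membre.

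Il ne reste qu'à calculer. Le twist par $(-1)^n$ remplace $M_{N,d}$ par lui-même si $d$ est pair et par $M_{N,2d}-M_{N,d}$ si $d$ est impair (les racines de $\Phi_{2f}$ étant opposées à celles de $\Phi_f$) ; en appliquant la formule \eqref{formuleM} aux $\Ind M_{N,d}$, en réindexant la somme double par $j=\ell$ (c'est-à-dire $dj\mid N$), puis en utilisant $\sum_{d\mid M}d\,b_d=\lambda(M)$ (formule \eqref{sommeb}) ainsi que l'identité catalane $c_n(2n+1)=\binom{2n+1}{n}$ et la parité $\binom{2n+2}{2}\equiv n+1\pmod 2$, on retombe exactement sur la composante de degré $N$ de $\cha_{\oV}$ donnée par la proposition \ref{formuleV} : le signe $(-1)^{2n(n-1)/j}=(-1)^{n(2n+2)/j}$ y est produit précisément par le twist, et le coefficient de $p_1^{N}$ s'y répartit en la part $c_n$ (issue de $M_{N,N}$) et les parts en $b_d$ grâce aux deux identités ci-dessus. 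Réciproquement, cette composante détermine $\Ind_{\ZZ_{/N}}^{\sym_N}$ du $\gammav$-module, donc le $\gammav$-module, donc le $g$-module. L'obstacle principal est cette comptabilité fine des signes — en particulier vérifier que le terme diagonal en $p_1^{N}$ se scinde correctement — et, plus en amont, l'identification du scalaire $(-1)^n$ reliant les deux générateurs.
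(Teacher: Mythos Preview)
Your plan is correct and follows essentially the same route as the paper: start from the expression for $\cha_{\oV}$ in Proposition~\ref{formuleV}, peel off the weight sign $(-1)^n$, pass from the cyclic operator $\gammav$ to the generator $(-1)^{n+1}\thetav$, expand $\lambda$ via \eqref{sommeb}, recognise the pattern \eqref{formuleM} for $\Ind M_{N,d}$, and conclude by injectivity of $\Ind$ on the sublattice spanned by the $M_{N,d}$. The only difference is how the sign twist is handled: the paper encodes it as the involution $\omega$ on symmetric functions and proves a small commutation lemma (Lemme~\ref{involution}) showing that $\omega$ on induced characters corresponds exactly to negating the cyclic generator; you instead compute the twist directly on each $M_{N,d}$ at the module level. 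Both are fine.

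Two small points of precision. First, your rule ``$M_{N,d}\mapsto M_{N,2d}-M_{N,d}$ for $d$ odd'' describes the twist by $-1$, not by $(-1)^n$; when $n$ is even the twist is trivial, so the case split on the parity of $n$ should be made explicit (this is precisely what the global factor $(-1)^{n+1}$ in the statement encodes). Second, the Catalan identity $c_n(2n+1)=\binom{2n+1}{n}$ and the parity of $\binom{2n+2}{2}$ are not needed to match the $p_1^{N}$ coefficients: the twist preserves dimensions, so the $p_1^{N}$ term passes through unchanged up to the global $(-1)^n$, and the two summands of $\cha_{\oV}$ match termwise with $c_n M'_{N,N}$ and $-(-1)^{n+1}\sum_d b_d M'_{N,d}$ respectively. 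Those identities are a pleasant consistency check (they say the virtual dimension is $c_n$) but they are not part of the argument.
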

\begin{proof}
  La formule pour $\cha_{\oV}$ de la prop. \ref{formuleV} peut s'écrire, en utilisant \eqref{sommeb}, comme
  \begin{equation}
    \sum_{n \geq 1} (-1)^{n-1} c_{n-1} p_1^{2n} + \sum_{n \geq 1} \frac{1}{2n} \sum_{ j| 2n} j b_j \sum_{\ell|2n/j} \phi(\ell) (-1)^{2n(n-1)/\ell} p_\ell^{2n/\ell}.
  \end{equation}
  En tenant compte du fait que $\oV(2n-1)$ est en poids $n-1$, on obtient pour l'action de $\gammav=-\thetav$, la formule
  \begin{equation}
    \sum_{n \geq 1} c_{n-1} p_1^{2n} + (-1)^{n-1} \sum_{n \geq 1}  \sum_{ j| 2n} b_j \frac{j}{2n}\sum_{\ell|2n/j} \phi(\ell) (-1)^{2n(n-1)/\ell} p_\ell^{2n/\ell}.
  \end{equation}
   En faisant agir $\omega^{n-1}$ sur la partie de degré $2n$, on obtient
  \begin{equation}
    \sum_{n \geq 1} c_{n-1} p_1^{2n} - (-1)^{n} \sum_{n \geq 1}  \sum_{ j| 2n} b_j \frac{j}{2n}\sum_{\ell|2n/j} \phi(\ell) p_\ell^{2n/\ell},
  \end{equation}
  qui décrit les actions de $(-1)^{n} \thetav$ sur $\oV(2n-1)$, par le lemme
  \ref{involution}.

  Enfin, on reconnaît la formule \eqref{formuleM}, et on obtient l'expression virtuelle
  \begin{equation}
    c_n M'_{2n+2,2n+2} - (-1)^{n+1} \sum_{d | 2n+2} b_d M'_{2n+2,d}
  \end{equation}
  pour la représentation induite de $(-1)^{n+1} \thetav$ sur
  $\oV(2n+1)$. On en déduit l'énoncé en utilisant l'injectivité de
  l'application linéaire $\Ind$ (voir \cite[\S 1.4]{chapoton_AIF}).
\end{proof}

\begin{remark}
  La forme de cette proposition fait penser à l'existence possible
  d'une suite exacte courte. Il serait intéressant de décrire
  explicitement une telle suite exacte.
\end{remark}

\begin{lemma}
  \label{involution}
  On a un carré commutatif
  \begin{equation}
     \xymatrix{
    \ZZ_{/2n}-\mod \ar[r]^\Ind \ar[d]^{\alpha} & \sym_{2n}-\mod \ar[d]^{\omega} \\
    \ZZ_{/2n}-\mod \ar[r]^\Ind & \sym_{2n}-\mod
  }
  \end{equation}
  où $\alpha$ est l'application qui multiplie par $-1$ l'action du
  générateur de $\ZZ_{/2n}$ et $\omega$ est l'involution des fonctions
  symétriques définie par \eqref{def_omega}.
\end{lemma}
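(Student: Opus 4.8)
The plan is to reduce the statement to a standard fact about induction from a cyclic subgroup of the symmetric group. Realise $\ZZ_{/2n}$ inside $\sym_{2n}$ as the subgroup generated by a single $2n$-cycle $g$, as in the definition of $\Ind^{\sym_{2n}}_{\ZZ_{/2n}}$. The first observation I would record is that $g$ is an \emph{odd} permutation, since a $k$-cycle has signature $(-1)^{k-1}$ and $2n-1$ is odd; hence the restriction to $\ZZ_{/2n}$ of the sign representation of $\sym_{2n}$ is precisely the one-dimensional character $\epsilon$ with $\epsilon(g)=-1$. With this in hand, the two vertical arrows of the square get a clean description: the functor $\alpha$ is exactly ``tensor with $\epsilon$'', and the functor $\omega$ on $\sym_{2n}$-modules is realised by $M\mapsto M\otimes\mathrm{sgn}$, whose effect on characters is the fundamental involution $p_i\mapsto(-1)^{i-1}p_i$ of \eqref{def_omega} (cf.\ \cite{macdonald}).

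Once this dictionary is set up, the commutativity of the square is just the projection formula for induced representations: for any $\ZZ_{/2n}$-module $V$ one has
\begin{equation*}
  \Ind^{\sym_{2n}}_{\ZZ_{/2n}}\bigl(V \otimes \operatorname{Res}^{\sym_{2n}}_{\ZZ_{/2n}}\mathrm{sgn}\bigr) \;\simeq\; \bigl(\Ind^{\sym_{2n}}_{\ZZ_{/2n}} V\bigr) \otimes \mathrm{sgn},
\end{equation*}
whose left-hand side is $\Ind(\alpha V)$ and whose right-hand side is $\omega(\Ind V)$. Since the functors involved are exact this already gives more than is needed, but in any case it yields the required identity of induced characters.

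As a sanity check I would also run the explicit computation, working with the one-dimensional $\ZZ_{/2n}$-modules $\rho_a$ (the generator acting by $\zeta^a$, $\zeta=e^{i\pi/n}$), which span the complexified representation ring and satisfy $\alpha(\rho_a)=\rho_{a+n}$. The Frobenius formula for the induced class function gives
\begin{equation*}
  \cha\bigl(\Ind^{\sym_{2n}}_{\ZZ_{/2n}}\rho_a\bigr) = \frac{1}{2n}\sum_{m \mid 2n}\Bigl(\,\sum_{\substack{0\le j<m\\ \gcd(j,m)=1}} \zeta^{a(2n/m)j}\Bigr)\,p_m^{2n/m};
\end{equation*}
applying $\omega$ multiplies the $m$-th summand by $(-1)^{(m-1)\cdot 2n/m}$, whereas in $\cha(\Ind\rho_{a+n})$ an extra factor $\zeta^{n(2n/m)j}=(-1)^{(2n/m)j}$ appears inside the inner sum. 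Matching the two reduces, divisor by divisor, to the identity $(-1)^{(m-1)\cdot 2n/m}=(-1)^{(2n/m)j}$ for $\gcd(j,m)=1$: when $m$ is odd one has $m\mid n$, so $2n/m$ is even and both sides equal $1$; when $m$ is even the condition $\gcd(j,m)=1$ forces $j$ odd, whence $(-1)^{(2n/m)j}=(-1)^{2n/m}=(-1)^{(m-1)\cdot 2n/m}$. Summing over $m$ and $j$ then gives $\omega(\cha\Ind\rho_a)=\cha(\Ind\rho_{a+n})=\cha(\Ind\alpha(\rho_a))$. The only point requiring any attention in either route is this parity bookkeeping; I do not expect a genuine obstacle.
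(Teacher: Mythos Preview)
Your proof is correct. The paper's own argument is essentially your ``sanity check'': it writes the induced character of a general $\chi$ as
\[
  \frac{1}{2n}\sum_{j\mid 2n}\ \sum_{\substack{1\le i\le 2n/j\\ \gcd(i,2n/j)=1}}\chi(ji)\,p_{2n/j}^{\,j},
\]
replaces $\chi$ by $\alpha(\chi)$ to pick up an extra factor $(-1)^{ji}$, and then observes that $(-1)^{ji}=(-1)^{j}$ under the coprimality constraint (for $j$ odd, $2n/j$ is even and hence $i$ must be odd) --- exactly the parity bookkeeping you carry out divisor by divisor. Your first route via the projection formula, identifying $\alpha$ with tensoring by $\operatorname{Res}^{\sym_{2n}}_{\ZZ_{/2n}}\mathrm{sgn}$ and $\omega$ with tensoring by $\mathrm{sgn}$, is not in the paper; it is shorter and explains conceptually \emph{why} the parity check is bound to succeed, at the modest price of invoking the standard dictionary between $\omega$ and the sign twist.
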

\begin{proof}
  La formule pour l'induite d'un caractère $\chi$ est donnée par
  \begin{equation*}
    \frac{1}{2n}\sum_{j | 2n} \sum\limits_{\substack{i=1\\i \wedge 2n/j=1}}^{2n/j} \chi(j i)  p_{2n/j}^j.
  \end{equation*}
  Pour le caractère $\alpha(\chi)$ tordu par le signe, on obtient donc
  \begin{equation*}
    \frac{1}{2n}\sum_{j | 2n} \sum\limits_{\substack{i=1\\i \wedge 2n/j=1}}^{2n/j} (-1)^{j i}\chi(j i)  p_{2n/j}^j.
  \end{equation*}
  Mais dans cette somme, $(-1)^{j i}$ vaut toujours $(-1)^j$, que $j$
  soit pair ou impair. On obtient ainsi la formule pour l'image par
  $\omega$ de l'induite de $\chi$, et donc le carré commutatif voulu.
\end{proof}

On déduit de la proposition \ref{somme_de_M} l'énoncé suivant. 
\begin{theorem}
  Le polynôme caractéristique de la transformation de Coxeter du poset
  de Tamari $\Y_n$ est donné par
  \begin{equation}
    \frac{(x^{2n+2}-1)^{c_n}}{\left(\prod_{d | 2n+2} (x^d - (-1)^{d(n+1)})^{b_d}\right)^{(-1)^{n+1}}}.
  \end{equation}
\end{theorem}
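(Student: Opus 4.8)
The plan is to read the characteristic polynomial off directly from Proposition~\ref{somme_de_M}, after reducing from $\theta$ to $\thetav$ via Theorem~\ref{theo_idem}. Since $\psi$ is an isomorphism of free abelian groups and $\psi\thetav=\theta\psi$ by Theorem~\ref{theo_idem}, the endomorphisms $\thetav$ of $\oV(2n+1)$ and $\theta$ of $K_0(\mod \Y_n)$ are conjugate, hence have the same characteristic polynomial; it therefore suffices to compute that of $\thetav$. I set $g=(-1)^{n+1}\thetav$, the generator of $\ZZ_{/2n+2}$ appearing in Proposition~\ref{somme_de_M}, so that $\thetav=(-1)^{n+1}g$.

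Next I would compute the characteristic polynomial of $g$. The characteristic polynomial is multiplicative over direct sums, so it extends to a homomorphism from the representation ring to the group of nonzero rational functions; on the cyclic module $M_{2n+2,d}=\QQ[t]/(t^d-1)$ the generator of $\ZZ_{/2n+2}$ acts by multiplication by $t$, with characteristic polynomial $x^d-1$. Feeding the virtual decomposition of Proposition~\ref{somme_de_M} into this homomorphism yields
\[
  \chi_g(x)=\frac{(x^{2n+2}-1)^{c_n}}{\prod_{d\mid 2n+2}(x^d-1)^{(-1)^{n+1}b_d}} .
\]
As a consistency check, the degree of this rational function is $c_n(2n+2)-(-1)^{n+1}\sum_{d\mid 2n+2}d\,b_d=c_n(2n+2)-(-1)^{n+1}\lambda(2n+2)$ by \eqref{sommeb}, which equals $c_n=\dim\oV(2n+1)$ using \eqref{def_lambda}.

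Then I would pass from $g$ to $\thetav=(-1)^{n+1}g$ by replacing each eigenvalue $\zeta$ of $g$ by $(-1)^{n+1}\zeta$. Since $2n+2$ is even, $-1$ is itself a $(2n+2)$-th root of unity, so $\zeta\mapsto(-1)^{n+1}\zeta$ permutes the $(2n+2)$-th roots of unity and leaves $x^{2n+2}-1$ invariant; the numerator is unchanged. For each divisor $d$ of $2n+2$, as $\zeta$ runs through the $d$-th roots of unity, $(-1)^{n+1}\zeta$ runs through the $d$ roots of $x^d-(-1)^{d(n+1)}$, because $\bigl((-1)^{n+1}\zeta\bigr)^d=(-1)^{d(n+1)}$; hence the factor $x^d-1$ is replaced by $x^d-(-1)^{d(n+1)}$. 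The displayed ratio therefore becomes
\[
  \frac{(x^{2n+2}-1)^{c_n}}{\left(\prod_{d\mid 2n+2}\bigl(x^d-(-1)^{d(n+1)}\bigr)^{b_d}\right)^{(-1)^{n+1}}},
\]
which is exactly the asserted characteristic polynomial of $\theta$.

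The substantive content all lies upstream, in Proposition~\ref{somme_de_M} (itself resting on the Koszul-duality computation of $\cha_{\oV}$ via the Legendre transform) and in Theorem~\ref{theo_idem}. The only points needing care in this last step are bookkeeping ones: first, checking that "characteristic polynomial'' is well defined for the virtual module as a ratio of monic integer polynomials — the $b_d$ may be negative and the overall sign $(-1)^{n+1}$ decides which factors sit in the denominator; second, getting the exponent $(-1)^{d(n+1)}$ right in the root-of-unity substitution. I expect this last sign to be the easiest thing to slip on, and I would double-check it on the small cases $n=1,2$.
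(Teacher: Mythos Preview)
Your proposal is correct and follows essentially the same route as the paper: deduce the characteristic polynomial of $(-1)^{n+1}\thetav$ from Proposition~\ref{somme_de_M} via $\det(xI-M_{2n+2,d})=x^d-1$, then pass from $(-1)^{n+1}\thetav$ to $\thetav$ (equivalently $\theta$, by Theorem~\ref{theo_idem}) by the sign substitution. The paper phrases this last step as ``replace $x$ by $(-1)^{n+1}x$ and simplify the signs,'' which is exactly your eigenvalue substitution $\zeta\mapsto(-1)^{n+1}\zeta$; your degree check is a pleasant extra.
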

\begin{proof}
  Comme le polynôme caractéristique de $M_{n,d}$ est $x^d-1$, on
  déduit de la proposition \ref{somme_de_M} que le polynôme
  caractéristique de $(-1)^{n+1} \thetav$ sur $\oV(2n+1)$ est
   \begin{equation}
    \frac{(x^{2n+2}-1)^{c_n}}{\left(\prod_{d | 2n+2} (x^d - 1)^{b_d}\right)^{(-1)^{n+1}}}.
  \end{equation}
  Pour obtenir le polynôme caractéristique de $\thetav$ (qui est aussi
  celui de $\theta$ par le théorème \ref{theo_idem}), il suffit de
  remplacer $x$ par $(-1)^{n+1} x$ et de simplifier les signes.
\end{proof}

Ceci démontre les conjectures 3.4 et 3.5 de \cite{chapoton_AIF},
modulo une reformulation simple.

\subsection{Calcul de la transformée de Legendre}

Cette section est consacrée au calcul de la transformée de Legendre de
la fonction symétrique associée à l'opérade cyclique $\oW$.

On introduit la fonction symétrique $A$ (qui correspond à $\cha_\oV$) :
\begin{equation}
  A = \sum_{n \geq 1} (-1)^{n-1} c_{n-1} p_1^{2n} + \sum_{n \geq 1} \frac{1}{2n} \sum_{ j| 2n} \lambda({2n/j}) \phi(j) (-1)^{2n(n-1)/j} p_j^{2n/j},
\end{equation}
où $\lambda$ et $c_n$ sont définis dans \eqref{def_lambda}. On considère aussi la fonction symétrique $B$ (qui correspond à $-\Sigma \cha_\oW$) :
\begin{equation}
  B= \sum_{n \geq 1} \frac{1}{2n} \sum_{ j| 2n} (-1)^{2n n/j} \phi(j) p_{j}^{2n/j}.
\end{equation}

On calcule sans difficulté les dérivées partielles par rapport à $p_1$ :
\begin{equation}
   \partial_{p_1} A = \sum_{n\geq 1} (-1)^{n-1} c_{n-1} p_1^{2n-1}
\end{equation}
et
\begin{equation}
    \partial_{p_1} B = \frac{p_1}{1-p_1^2}.
\end{equation}

On remarque que le premier terme de $A$ est $p_1 \partial_{p_1} A$.

\begin{proposition}
  \label{transfo}
  La fonction $A$ est la transformée de Legendre de $B$.
\end{proposition}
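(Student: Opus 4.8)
The plan is to verify the defining functional equation of the Legendre transform by a direct computation, exploiting the fact that $\partial_{p_1}A$ and $\partial_{p_1}B$ are power series in the single variable $p_1$. Since the Legendre transform is involutive, it is equivalent to show that $B$ is the Legendre transform of $A$, i.e. that $B\circ\partial_{p_1}A + A = p_1\,\partial_{p_1}A$. By the remark preceding the statement the first summand of $A$ equals $p_1\,\partial_{p_1}A$, so writing $A = p_1\,\partial_{p_1}A + A'$ with
\[
A' = \sum_{n\ge 1}\frac{1}{2n}\sum_{j\mid 2n}\lambda(2n/j)\,\phi(j)\,(-1)^{2n(n-1)/j}\,p_j^{2n/j},
\]
the claim collapses to $B\circ\partial_{p_1}A = -A'$. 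Put $f := \partial_{p_1}A = \sum_{n\ge 1}(-1)^{n-1}c_{n-1}\,p_1^{2n-1}$. Using $\sum_{m\ge 0}c_m x^m = (1-\sqrt{1-4x})/(2x)$ one gets $f = (\sqrt{1+4p_1^2}-1)/(2p_1)$, the unique power series satisfying $f = p_1(1-f^2)$; since $\partial_{p_1}B = p_1/(1-p_1^2)$, this is exactly the companion relation $\partial_{p_1}A\circ\partial_{p_1}B = p_1$.

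Because $f$ involves $p_1$ only, the plethysm $B\circ f$ is obtained from $B$ by substituting $p_j\mapsto f_j := (\sqrt{1+4p_j^2}-1)/(2p_j)$, and $f_j$ again satisfies $f_j = p_j(1-f_j^2)$. Each monomial of $B$ and of $A'$ involves a single variable $p_j$, so both $B\circ f$ and $A'$ split over the alphabets $\{p_j\}_{j\ge 1}$, and it suffices to match, for each fixed $j$, the components in $\QQ[[p_j]]$. Reindexing the sums by the exponent $m=2n/j$ and writing $t=p_j$ (so that $2nn/j = jm^2/2$ and $2n(n-1)/j = jm^2/2 - m$), and cancelling the common factor $\phi(j)/j$, the per-$j$ identity becomes
\begin{multline*}
\sum_{\substack{m\ge 1\\ jm\ \mathrm{even}}}\frac{(-1)^{jm^2/2}}{m}\,f_j^{\,m} \\
= -\sum_{\substack{m\ge 1\\ jm\ \mathrm{even}}}\frac{(-1)^{jm^2/2}\,(-1)^m\,\lambda(m)}{m}\,t^{m}.
\end{multline*}
A short case analysis on $j\bmod 4$ — in which $(-1)^{jm^2/2}$ equals $1$, $(-1)^m$, or (once $m$ is forced to be even) $1$ according as $j\equiv 0, 2\pmod 4$ or $j$ is odd — together with $\sum_{m\ge1}x^m/m = -\log(1-x)$, shows that all three cases descend from the single analytic identity
\[
\sum_{m\ge 1}\frac{\lambda(m)}{m}\,t^{m} \;=\; \log\!\bigl(1+f_j\bigr),
\]
the case $j\equiv 0\pmod 4$ following from it via $t\mapsto -t$ (note $f_j(-t) = -f_j(t)$), and the odd-$j$ case via its part of even degree.

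To prove this last identity I would differentiate it: it is equivalent to $\sum_{m\ge 1}\lambda(m)\,t^m = t\,f_j'/(1+f_j)$. With $S := \sqrt{1+4t^2}$, the relation $f_j = t(1-f_j^2)$ gives $f_j = (S-1)/(2t)$, whence $1+2tf_j = S$ and $f_j' = (S-1)/(2t^2 S)$, so the right-hand side equals $(S-1)/\bigl(S(2t+S-1)\bigr)$. On the other hand $\lambda(2m) = \tfrac12(-1)^m\binom{2m}{m}$ and $\lambda(2m+1) = (-1)^m\binom{2m}{m}$ straight from the definition of $\lambda$, so $\sum_{m\ge 0}\binom{2m}{m}x^m = (1-4x)^{-1/2}$ taken at $x=-t^2$ yields $\sum_{m\ge 1}\lambda(m)t^m = (1+2t-S)/(2S)$. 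Clearing denominators, the asserted equality becomes $(2t+1-S)(2t+S-1) = 2(S-1)$, i.e. $(2t)^2 - (S-1)^2 = 2(S-1)$, i.e. $S^2 = 1+4t^2$, which holds. Summing over all $j$ then gives $B\circ\partial_{p_1}A = -A'$, hence $B\circ\partial_{p_1}A + A = p_1\,\partial_{p_1}A$, so $A$ is the Legendre transform of $B$.

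The step I expect to be the main obstacle is the bookkeeping in the second paragraph: reindexing the double sums defining $B$ and $A'$, tracking the parity of $jm^2/2$ through the residue classes of $j$ modulo $4$, and verifying that the three resulting identities all genuinely reduce to the single one $\sum_{m\ge1}\lambda(m)t^m/m = \log(1+f_j)$. Once that reduction is secured, the remaining ingredients — the Catalan and central-binomial generating functions and the elementary identity $S^2 = 1+4t^2$ — are entirely routine.
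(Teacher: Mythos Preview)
Your proof is correct. The overall architecture matches the paper's: reduce the Legendre identity to a one-variable series identity indexed by $j$, then dispatch that identity by a parity case analysis and a logarithmic Taylor expansion. The genuine difference is that you invoke the involutivity of the Legendre transform and verify the relation in the reverse direction, computing $B\circ\partial_{p_1}A$ rather than the paper's $A\circ\partial_{p_1}B$. Concretely, the paper substitutes the rational series $p_j\mapsto p_j/(1-p_j^2)$ into the $\lambda$-weighted sum $A'$, whereas you substitute the algebraic series $p_j\mapsto f_j=(\sqrt{1+4p_j^2}-1)/(2p_j)$ into the simpler $B$; the two per-$j$ identities are literally the same equation read with the inverse substitution $x\leftrightarrow f_j$. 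What your route buys is a self-contained derivation of the key expansion $\sum_{m\ge1}\lambda(m)t^m/m=\log(1+f_j)$ by differentiation and the elementary check $S^2=1+4t^2$, avoiding the appeal to the two Taylor expansions quoted from \cite{chapoton_AIF}; the price is the mildly heavier substitution and the three-way split on $j\bmod4$ (the paper gets away with even versus odd $j$). Both approaches are of comparable length and difficulty.
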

\begin{proof}
  Calculons d'abord $p_1 \partial_{p_1} B - B$ :
  \begin{equation*}
    \frac{p_1^2}{1-p_1^2}-\sum_{n \geq 1} \frac{1}{2n} \sum_{ j| 2n} (-1)^{2n n/j} \phi(j) p_{j}^{2n/j}.
  \end{equation*}
  D'autre part, $A \circ \partial_{p_1} B$ vaut
  \begin{equation*}
    (\partial_{p_1} B) (\partial_{p_1} A \circ \partial_{p_1} B) + \sum_{n \geq 1} \frac{1}{2n} \sum_{ j| 2n} \lambda({2n/j}) \phi(j) (-1)^{2n(n-1)/j} \left(\frac{p_j}{1-p_j^2}\right)^{2n/j},
  \end{equation*}
  ce qui donne
  \begin{equation*}
    \frac{p_1^2}{1-p_1^2} + \sum_{n \geq 1} \frac{1}{2n} \sum_{ j| 2n} \lambda({2n/j}) \phi(j) (-1)^{2n(n-1)/j} \left(\frac{p_j}{1-p_j^2}\right)^{2n/j}.
  \end{equation*}

  Il suffit donc de montrer l'égalité entre les seconds termes. En échangeant les sommations, on obtient d'une part
  \begin{equation*}
    -\sum_{j \geq 1} \frac{\phi(j)}{j} \sum\limits_{\substack{n \geq 1\\j| 2n}} \frac{j}{2n}(-1)^{2n n/j} p_{j}^{2n/j},
  \end{equation*}
  et d'autre part
  \begin{equation*}
    \sum_{j \geq 1} \frac{\phi(j)}{j}\sum\limits_{\substack{n \geq 1\\j| 2n}} \frac{j}{2n} \lambda({2n/j})(-1)^{2n(n-1)/j} \left(\frac{p_j}{1-p_j^2}\right)^{2n/j}.
  \end{equation*}
  Leur égalité résulte du lemme \ref{series_formelles}.
\end{proof}

\begin{lemma}
  \label{series_formelles}
  Pour tout $j\geq 1$ fixé, on a
  \begin{equation}
    -\sum\limits_{\substack{n \geq 1\\j| 2n}} \frac{j}{2n} (-1)^{2n n/j} x^{2n/j} =
    \sum\limits_{\substack{n \geq 1\\j| 2n}} \frac{j}{2n} \lambda({2n/j})(-1)^{2n(n-1)/j} \left(\frac{x}{1-x^2}\right)^{2n/j}.
  \end{equation}
\end{lemma}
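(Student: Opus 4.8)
The plan is to reduce the stated identity to a single generating‑function identity by the substitution $m = 2n/j$. Writing $2n = jm$ one has $\frac{j}{2n} = \frac1m$, $\frac{2n\,n}{j} = \frac{jm^2}{2}$ and $\frac{2n(n-1)}{j} = \frac{jm^2}{2} - m$, while the condition ``$j \mid 2n$, $n \geq 1$'' becomes ``$m \geq 1$ with $jm$ even''; a one‑line parity check shows that on this range $(-1)^{jm^2/2}$ equals $1$ when $j \not\equiv 2 \pmod 4$ and equals $(-1)^m$ when $j \equiv 2 \pmod 4$. Cancelling this common per‑term factor, the left‑hand side becomes $-\sum_m \frac{1}{m}(\varepsilon' x)^m = \log(1 - \varepsilon' x)$ (or $\tfrac12\log(1-x^2)$ when $j$ is odd, since then only even $m$ occur), and the right‑hand side becomes $G\!\left(\varepsilon\,\frac{x}{1-x^2}\right)$, where $G(u) := \sum_{m \geq 1}\frac{\lambda(m)}{m}u^m$ and $\varepsilon,\varepsilon' \in \{\pm1\}$ depend only on $j \bmod 4$. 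Concretely: for $j \equiv 0 \pmod 4$ the claim is $G\!\left(-\frac{x}{1-x^2}\right) = \log(1-x)$; for $j \equiv 2 \pmod 4$ it is $G\!\left(\frac{x}{1-x^2}\right) = \log(1+x)$, which follows from the previous one by $x \mapsto -x$; and for $j$ odd it is $\sum_{\ell \geq 1}\frac{\lambda(2\ell)}{\ell}\left(\frac{x}{1-x^2}\right)^{2\ell} = \log(1-x^2)$, which is the sum of the two preceding identities via the elementary relation $G(u) + G(-u) = \sum_{\ell \geq 1}\frac{\lambda(2\ell)}{\ell}u^{2\ell}$ applied at $u = \frac{x}{1-x^2}$.

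So it suffices to prove the single identity $G\!\left(-\frac{x}{1-x^2}\right) = \log(1-x)$, and since both sides are formal power series in $x$ vanishing at $x = 0$ it is enough to match derivatives, i.e. to know the closed form of $\sum_{m \geq 1}\lambda(m)u^m = u\,G'(u)$. I would obtain this by noting that $\lambda(m) = (-1)^{\lfloor m/2\rfloor}\binom{m-1}{\lfloor (m-1)/2\rfloor}$ (immediate, since $\binom{m}{2} - \lfloor m/2 \rfloor$ is always even), and splitting by the parity of $m$: the odd part $\sum_{k \geq 0}(-1)^k\binom{2k}{k}u^{2k+1}$ sums to $\frac{u}{\sqrt{1+4u^2}}$ via $\sum_k \binom{2k}{k}z^k = (1-4z)^{-1/2}$, and the even part $\sum_{k \geq 0}(-1)^{k+1}\binom{2k+1}{k}u^{2k+2}$ sums to $\tfrac12\!\left(\frac{1}{\sqrt{1+4u^2}} - 1\right)$ via $\sum_k\binom{2k+1}{k}z^k = \frac{1}{2z}\!\left((1-4z)^{-1/2} - 1\right)$. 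Adding gives $\sum_{m \geq 1}\lambda(m)u^m = \frac{1+2u}{2\sqrt{1+4u^2}} - \frac12$, hence $G'(u) = \frac{1+2u}{2u\sqrt{1+4u^2}} - \frac{1}{2u}$.

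The last step is the substitution $u = -\frac{x}{1-x^2}$ in this formula for $G'$ and a comparison with $\frac{d}{dx}\log(1-x) = -\frac{1}{1-x}$. Here one uses the clean simplification $1 + 4u^2 = \frac{(1+x^2)^2}{(1-x^2)^2}$, so that $\sqrt{1+4u^2} = \frac{1+x^2}{1-x^2}$ (the formal square root with constant term $1$), together with $\frac{du}{dx} = -\frac{1+x^2}{(1-x^2)^2}$; after cancellation $G'(u)\frac{du}{dx}$ collapses exactly to $-\frac{1}{1-x}$, which finishes the proof. The main obstacle is the sign bookkeeping --- tracking $(-1)^{jm^2/2}$ and $(-1)^m$ cleanly through the change of variables and the three residue classes of $j$ modulo $4$ --- together with the perhaps not‑immediately‑obvious recognition that the binomials occurring in $\lambda(m)$ are central binomial coefficients; everything else is routine formal power series manipulation, and no analytic subtlety intervenes.
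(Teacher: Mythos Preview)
Your proof is correct and takes essentially the same route as the paper: substitute $N=2n/j$ to reduce both sides to statements about the series $G(y)=\sum_{N\geq 1}\lambda(N)\,y^N/N$, then handle the parity of $j$ separately. The only differences are in the final verification: the paper keeps the case $j$ even as a single computation (with the sign $(-1)^{jN/2}$ carried as a parameter) and quotes closed forms for $G(y)$ and for $\sum_{N}\frac{1}{2N}\binom{2N}{N}y^{N}$ from an external appendix, whereas you check $G\!\left(-x/(1-x^2)\right)=\log(1-x)$ directly by matching derivatives and then obtain the odd-$j$ identity as the sum of the two even-$j$ identities --- so your argument is self-contained and avoids the citation.
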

\begin{proof}
  On peut réécrire cette égalité comme suit :
  \begin{equation*}
    - \sum\limits_{\substack{N \geq 1\\j N \text{pair}}} \frac{1}{N} (-1)^{j N^2/2} x^N
    = \sum\limits_{\substack{N \geq 1\\j N \text{pair}}} \frac{1}{N} \lambda(N) (-1)^{j N/2+N}\left(\frac{x}{1-x^2}\right)^{N}.
  \end{equation*}
  Pour vérifier cette relation, on distingue le cas $j$ pair du cas $j$ impair.

  Si $j$ est pair, on doit vérifier que
  \begin{equation*}
     - \sum_{N \geq 1} \frac{1}{N} (-1)^{j N/2} x^N
    = \sum_{N \geq 1} \frac{1}{N} \lambda(N) (-1)^{j N/2+N} \left(\frac{x}{1-x^2}\right)^{N}.   
  \end{equation*}
  Ceci résulte aisément du développement de Taylor 
  \begin{equation*}
    \sum_{N \geq 1} \frac{1}{N} \lambda(N) y^{N} = -\log\left( \frac{1-2y+\sqrt{1+4y^2}}{2} \right).
  \end{equation*}

  Si $j$ est impair, on doit vérifier que
  \begin{equation*}
     - \sum_{N \geq 1} \frac{1}{2N} x^{2N}
    = \sum_{N \geq 1} \frac{1}{2N} \lambda(2N) \left(\frac{x}{1-x^2}\right)^{2N}.   
  \end{equation*}
  Ceci résulte du développement de Taylor
  \begin{equation*}
    \sum_{N \geq 1} \frac{1}{2N} \binom{2N}{N} y^{N} = -\log\left( \frac{1+\sqrt{1-4y^2}}{2} \right).
  \end{equation*}

  Pour ces deux développements de Taylor, le lecteur peut consulter
  \cite[Append. A]{chapoton_AIF}.
\end{proof}

\bibliographystyle{plain}
\bibliography{ternaire}

\end{document}